\newtheorem{theorem}{Theorem}[section]
\newtheorem{lemma}[theorem]{Lemma}
\theoremstyle{definition}
\newtheorem{definition}[theorem]{Definition}
\theoremstyle{remark}
\title{A Survey on Lawvere's Fixed-Point Theorem}
\author{Joaquim Reizi Barreto}
\date{\today}
\begin{document}

\maketitle

\begin{abstract}
This paper is a survey on fixed-point theory in category theory, focusing on Lawvere's Fixed-Point Theorem. It aims to clarify the theoretical background and recent research trends in a manner accessible to third-year undergraduate mathematics majors who have not formally studied category theory. We first introduce the fundamental notions in detail, such as terminal objects, products, exponential objects, evaluation maps, currying, and point-surjective morphisms. By using commutative diagrams and intuitive explanations, we highlight both the mathematical rigor and intuitive significance of each concept. Building on these foundations, we demonstrate how Lawvere's Fixed-Point Theorem guarantees the existence of self-referential or recursive structures, and we systematically organize the roles of the lemmas appearing in the proof (such as the universality of currying, the diagonal lemma, and the fixed-point construction lemma).

Furthermore, this paper touches upon advanced topics, demonstrating the wide-ranging applications of Lawvere's Fixed-Point Theorem. These include self-applicable programs and the existence of fixed-point combinators, the interpretation of identity types in type theory and homotopy type theory, and the generalization of recursive structures in higher category theory. Our discussion draws on a broad range of existing literature, such as Lawvere (1969)\cite{Lawvere1969}, Yanofsky (2003)\cite{Yanofsky2003}, Awodey-Warren (2009)\cite{AwodeyWarren2009}, Lurie (2009)\cite{Lurie2009}, and Kapulkin (2015)\cite{Kapulkin2015}, highlighting both the latest results and open problems. As concrete examples, we illustrate concepts using the singleton set in the category of sets or the idea of a fixed-point combinator (Y combinator) in programming, thereby bridging abstract ideas with tangible examples.

The goal of this paper is to show how the universal framework of self-reference provided by Lawvere's Fixed-Point Theorem offers a unified perspective on recursion and fixed points in mathematics, logic, and computer science. We also provide an overview of current advances in modern type theory and homotopy type theory and their accompanying open problems, suggesting directions for further research.
\end{abstract}

\tableofcontents
\newpage

\section{Introduction}\label{sec:introduction}
In many branches of modern mathematics and theoretical computer science, self-reference and recursive structures play a fundamental and crucial role. These phenomena underpin the theoretical foundations of recursion and self-application in the design of programming languages, type theory, logic, and even homotopy type theory. In this paper, we focus on Lawvere's Fixed-Point Theorem, which illustrates how the universal properties of category theory can formalize self-reference and fixed points in a general and rigorous way.

Originally proposed by F. William Lawvere in 1969, Lawvere's Fixed-Point Theorem shows, under the framework of Cartesian Closed Categories, how the existence of fixed points intimately connected with diagonal arguments can be established \cite{Lawvere1969}. This theorem provides a unified tool within category theory to handle self-referential phenomena that had been difficult to treat via conventional set-theoretic or logical methods. In particular, the concept of fixed-point combinators (e.g., the Y combinator) in the semantics or type systems of programming languages relies on this fundamental theoretical framework to justify the validity of recursive definitions \cite{Yanofsky2003, SotoAndrade1984}.

This paper first lays out the key concepts: terminal objects, products, exponential objects, evaluation maps, currying, point-surjective morphisms, and so forth, introducing each with intuitive explanations and commutative diagrams. These notions are vital in constructing the existence of fixed points within the setting of Cartesian Closed Categories. For example, in the category of sets, the terminal object is the singleton set $\{*\}$, the product set $A \times B$ naturally represents pairs of elements drawn from each set, $B^A$ represents the set of all functions from $A$ to $B$, and the evaluation map captures the notion of function application.

Next, we use these fundamental concepts to develop a proof of Lawvere's Fixed-Point Theorem. The argument relies on lemmas such as the universality of currying, point-surjective morphisms, and a fixed-point construction lemma, explaining how a self-referential structure appears as a fixed point. We also employ various fundamental properties of category theory, such as the naturality of evaluation maps and the associativity of morphism compositions, to maintain logical consistency and precision throughout the proof.

Moreover, due to its high level of generality, Lawvere's Fixed-Point Theorem has had a significant impact on ongoing research in type theory, homotopy type theory, and higher category theory \cite{AwodeyWarren2009, Lurie2009, Kapulkin2015, Shulman2015}. For example, within the frameworks of Voevodsky's Univalent Foundations and Lurie's Higher Topos Theory, new interpretations of recursive and self-referential structures have been actively investigated, with some ideas from Lawvere's theorem forming part of the foundational approach to these theories. Researchers have reported numerous specific applications, including type dependencies, identity type interpretations, and the proofs of existence of self-applicable functions in programming language theory \cite{AwodeyWarren2009, Jacobs1993, Norell2007}.

In the latter portion of the paper, we draw on this advanced body of research to unify the theoretical background and applications of Lawvere's Fixed-Point Theorem. We also discuss the latest research results and remaining open problems, aiming to help the reader gain a balanced understanding of fixed-point theory from both mathematical rigor and an intuitive perspective.

\begin{center}
\begin{tikzpicture}[node distance=2.5cm, auto,
  block/.style={
    rectangle, draw, rounded corners,
    minimum height=1cm, minimum width=10cm,
    text width=10cm, align=center
  },
  arrow/.style={->, thick, shorten >=4pt, shorten <=4pt}]

  \node [block] (prelim) {Preliminaries: Basic Concepts\par (Terminal objects, Products, CCC, Exponential objects, Evaluation maps, Currying, Point-surjective morphisms)};
  
  \node [block, below of=prelim] (lemmas) {Key Lemmas:\par (Universality of Currying, Diagonal Lemma, Fixed-Point Construction Lemma)};
  
  \node [block, below of=lemmas] (theorem) {Main Theorem:\par Lawvere's Fixed-Point Theorem};
  
  \node [block, below of=theorem] (proof) {Proof:\par Construction of fixed points using evaluation maps and the key lemmas};
  
  \node [block, below of=proof] (applications) {Applications:\par Type Theory, Homotopy Type Theory, and Programming};
  
  \node [block, below of=applications] (conclusion) {Conclusion \& Future Directions};

  \draw [arrow] (prelim) -- (lemmas);
  \draw [arrow] (lemmas) -- (theorem);
  \draw [arrow] (theorem) -- (proof);
  \draw [arrow] (proof) -- (applications);
  \draw [arrow] (applications) -- (conclusion);
\end{tikzpicture}
\end{center}

\newpage

\section{Preliminaries (Definitions)}\label{sec:definitions_detail}
\subsection{Overview and Background}\label{sec:prelim_overview}
In this section, we introduce the definitions of terminal objects, products, Cartesian Closed Categories, exponential objects, evaluation maps, currying, and point-surjective morphisms. These notions form the core concepts in understanding categorical structures, and they will be used later to prove the Fixed-Point Theorem and to discuss various applications of self-referential or recursive definitions.

\subsection{Intuitive Explanation and Concrete Examples}\label{sec:prelim_intuition}
For example, in the category of sets, the terminal object is the singleton set \(\{*\}\). There is a unique map from any set \(X\) to this singleton set (the constant map). Similarly, the product \(A \times B\) can be understood as the set of ordered pairs drawn from \(A\) and \(B\). The exponential object \(B^A\) corresponds to the set of all functions from \(A\) to \(B\), and the evaluation map \(\mathrm{ev}(f, a)\) sends a function \(f\) and an element \(a\) to \(f(a)\). Currying is the process by which a function of multiple variables is converted into a series of one-variable functions, capturing the idea of partial application in programming languages.

\subsection{Visualization with Commutative Diagrams}\label{sec:prelim_visualization}
These concepts can often be expressed with commutative diagrams. For instance, the uniqueness of the terminal object is captured by a diagram where every object \(X\) has exactly one arrow to the terminal object \(T\). Similarly, the universality of products or the evaluation map in the construction of exponential objects can be depicted in diagrams that show how unique maps factor through these universal objects.

\subsection{Foundation for Applications}\label{sec:prelim_applications}
These foundational concepts provide the tools for the proof of Lawvere's Fixed-Point Theorem and its many applications. In particular, the properties of point-surjective morphisms and the universality of currying form the key ingredients that yield self-referential structures. For example, in programming languages, the concept of a fixed-point combinator (such as the Y combinator) can be formulated using these categories-based notions, guaranteeing the existence of self-applicable functions. Similarly, in type theory or homotopy type theory, these definitions underpin the rigorous semantics of recursive definitions, identity types, and advanced research directions.

\subsection{Terminal Object}
\begin{definition}[Terminal Object]\label{def:terminal}\index{Terminal Object}
An object \(T\) in a category \(\mathcal{C}\) is called a \emph{terminal object} if for every object \(X \in \mathcal{C}\), there is exactly one morphism
\[
!_X : X \to T.
\]
\end{definition}

\textbf{Detailed Explanation:}  
A terminal object is one to which there is a \emph{unique} arrow from every object in the category. Intuitively, one might regard it as a universal “target” to which every object “collapses” in a single unique way.

\textbf{Elementary Image / Concrete Example:}  
In the category \(\mathbf{Set}\) of sets, the singleton set \(\{*\}\) is a terminal object. From any set \(X\), there is exactly one function to \(\{*\}\): the constant map sending every element to the unique point \(*\).

\textbf{Diagram:}  
\[
\begin{tikzcd}
X \arrow[dr, "!_X"'] \arrow[rr, dashed, "\exists!"] & & T \\
& T \arrow[ur, equal] &
\end{tikzcd}
\]
\textbf{Diagram Explanation:}  
For any object \(X\), there is exactly one morphism \(!_{X} : X \to T\). The dashed arrow labeled \(\exists!\) emphasizes the uniqueness.

\subsection{Product}
\begin{definition}[Product]\label{def:product}\index{Product}
Let \(A\) and \(B\) be objects in a category \(\mathcal{C}\). Their \emph{product} consists of an object \(A \times B\) together with projection morphisms
\[
\pi_A: A \times B \to A, \quad \pi_B: A \times B \to B,
\]
satisfying the following universal property:
\begin{itemize}
    \item For any object \(X\) and morphisms \(f: X \to A\), \(g: X \to B\), there is exactly one morphism \(h: X \to A \times B\) such that
    \[
    \pi_A \circ h = f, \quad \pi_B \circ h = g.
    \]
\end{itemize}
\end{definition}

\textbf{Detailed Explanation:}  
\(A \times B\) is an object that captures the information of both \(A\) and \(B\) simultaneously. Its universal property guarantees that for any \(X\) mapping into \(A\) and \(B\), there is a unique morphism \(h\) making the diagram commute.

\textbf{Elementary Image / Concrete Example:}  
In \(\mathbf{Set}\), \(A \times B\) is just the Cartesian product set. If \(f: X \to A\) and \(g: X \to B\), the map \(h: X \to A \times B\) is given by \(h(x) = (f(x), g(x))\).

\textbf{Diagram:}  
\[
\begin{tikzcd}
& X \arrow[dl, "f"'] \arrow[d, dashed, "h"] \arrow[dr, "g"] & \\
A & A \times B \arrow[l, "\pi_A"] \arrow[r, "\pi_B"'] & B
\end{tikzcd}
\]
\textbf{Diagram Explanation:}  
The unique factorization property means \(f\) and \(g\) factor through \(A \times B\) via a unique map \(h\).

\subsection{Cartesian Closed Category (CCC)}
\begin{definition}[Cartesian Closed Category]\label{def:ccc_detail}\index{Cartesian Closed Category}
A category \(\mathcal{C}\) is called a \emph{Cartesian Closed Category} if it satisfies:
\begin{enumerate}
    \item The existence of a terminal object (Definition \ref{def:terminal}).
    \item The existence of products (Definition \ref{def:product}).
    \item For any objects \(A,B \in \mathcal{C}\), the existence of an \emph{exponential object} \(B^A\) with an evaluation map
    \[
    \mathrm{ev}: B^A \times A \to B,
    \]
    satisfying the following universal property (the essence of currying):
    
    For any object \(C\) and morphism \(f: C \times A \to B\), there is exactly one morphism \(\tilde{f}: C \to B^A\) such that
    \[
    \mathrm{ev} \circ (\tilde{f} \times \mathrm{id}_A) = f.
    \]
\end{enumerate}
\end{definition}

\textbf{Detailed Explanation:}  
A Cartesian Closed Category internalizes the notion of “function spaces.” Given two objects \(A\) and \(B\), the exponential object \(B^A\) represents the object of all “internal morphisms” from \(A\) to \(B\). The evaluation map \(\mathrm{ev}\) allows any morphism \(f: C \times A \to B\) to be \emph{curried} into a unique morphism \(\tilde{f}: C \to B^A\).

\textbf{Elementary Image / Concrete Example:}  
In \(\mathbf{Set}\), \(B^A\) is the set of all functions from \(A\) to \(B\). If \(A = \{1,2\}\) and \(B = \{a,b\}\), then \(B^A\) is the set of all functions \(f: A \to B\). The evaluation map \(\mathrm{ev}(f,a)\) yields \(f(a)\).

\textbf{Diagram:}  
\[
\begin{tikzcd}
C \times A \arrow[r, "f"] \arrow[d, "\tilde{f} \times \mathrm{id}_A"'] & B \\
B^A \times A \arrow[ur, "\mathrm{ev}"'] &
\end{tikzcd}
\]
\textbf{Diagram Explanation:}  
Given \(f: C \times A \to B\), there is a unique \(\tilde{f}: C \to B^A\) whose composition with \(\mathrm{id}_A\) and \(\mathrm{ev}\) recovers \(f\). This is the categorical expression of currying.

\subsection{Exponential Object and Evaluation Map}
\begin{definition}[Exponential Object]\label{def:exp_detail}\index{Exponential Object}\index{Evaluation Map}
For objects \(A,B\) in a category \(\mathcal{C}\), an \emph{exponential object} \(B^A\) is given together with an evaluation map
\[
\mathrm{ev}: B^A \times A \to B,
\]
satisfying:
\begin{itemize}
    \item For any object \(C\) and morphism \(f: C \times A \to B\), there is a unique morphism \(\tilde{f}: C \to B^A\) making
    \[
    \mathrm{ev} \circ (\tilde{f} \times \mathrm{id}_A) = f
    \]
    commute.
\end{itemize}
\end{definition}

\textbf{Detailed Explanation:}  
The exponential object \(B^A\) captures the idea of an internal function space. The evaluation map \(\mathrm{ev}\) then “applies” a function to an element. By universality, any morphism \(f: C \times A \to B\) factors uniquely through \(\mathrm{ev}\) with a unique \(\tilde{f}\).

\textbf{Elementary Image / Concrete Example:}  
In \(\mathbf{Set}\), \(B^A\) is the set of all functions from \(A\) to \(B\). The evaluation map is the function application: \(\mathrm{ev}(f,a) = f(a)\).

\subsection{Currying}
\begin{definition}[Currying]\label{def:currying}
In a Cartesian Closed Category \(\mathcal{C}\), for any morphism \(f: C \times A \to B\), there exists a unique morphism \(\tilde{f}: C \to B^A\) such that
\[
\mathrm{ev} \circ (\tilde{f} \times \mathrm{id}_A) = f.
\]
This transformation is called \emph{currying}.
\end{definition}

\textbf{Detailed Explanation:}  
Currying transforms a two-variable morphism into a one-variable morphism. It is “partial application”: fix one argument and get a morphism depending only on the other argument.

\textbf{Elementary Image / Concrete Example:}  
Think of a function \(f(x,y) = x+y\). When we fix \(x\), we get a one-variable function in \(y\), i.e., \(y \mapsto x + y\). In the category-theoretic sense, the map \(f: C \times A \to B\) factors uniquely through \(\mathrm{ev}\) into a map \(\tilde{f}: C \to B^A\).

\textbf{Diagram:}  
\[
\begin{tikzcd}
C \times A \arrow[r, "f"] \arrow[d, "\tilde{f} \times \mathrm{id}_A"'] & B \\
B^A \times A \arrow[ur, "\mathrm{ev}"'] &
\end{tikzcd}
\]
\textbf{Diagram Explanation:}  
\(\tilde{f}\) is the unique map whose composition via \(\mathrm{ev}\) reconstructs \(f\).

\subsection{Point-Surjective Morphisms}
\begin{definition}[Point-Surjective Morphism]\label{def:point-surj_detail}
A morphism \(f: A \to B^A\) is called \emph{point-surjective} if for every morphism \(g: A \to B\), there exists a morphism \(a: 1 \to A\) (i.e., a “point” of \(A\)) such that
\[
g = \mathrm{ev} \circ (f \times a).
\]
\end{definition}

\textbf{Detailed Explanation:}  
This property means that through \(f\), one can recover all morphisms from \(A\) to \(B\) by using “points” in \(A\) and the evaluation map \(\mathrm{ev}\). Intuitively, every possible map \(g: A \to B\) factors via \(f\) and some chosen point \(a\).

\textbf{Elementary Image / Concrete Example:}  
In \(\mathbf{Set}\), to say a function \(f: A \to B^A\) is point-surjective means that for any function \(g: A \to B\), there is some \(a \in A\) so that \(g(x) = \mathrm{ev}(f(a), x)\) for all \(x\in A\).

\textbf{Diagram:}  
\[
\begin{tikzcd}[row sep=large, column sep=large]
1 \arrow[r, "a"] & A \arrow[r, "f"] & B^A \arrow[r, "\mathrm{ev}"] & B \\
\phantom{1} & \phantom{A} & A \arrow[ul, bend right=40, "!_A"'] \arrow[ur, "\exists\, ?" description] & \phantom{B}
\end{tikzcd}
\]
\textbf{Diagram Explanation:}  
For any morphism \(g: A \to B\), there is a point \(a: 1 \to A\) such that \(g\) factors as \(\mathrm{ev} \circ (f \times a)\). The arrows in the diagram illustrate how any desired map can be represented by evaluating \(f\) on a chosen point \(a\).
\subsection{Supplementary Explanation: A Concrete Example of Point-Surjective}
To further clarify the concept of \emph{point-surjective} morphisms, we now provide additional background and a concrete example in the category of sets. In this context, a morphism $f: A \to B^A$ is said to be point-surjective if, for every function $g: A \to B$, there exists a point $a \in A$ such that 
\[
g(x) = \mathrm{ev}(f(a), x) \quad \text{for all } x \in A.
\]
This means that every function from $A$ to $B$ can be "captured" by evaluating the function $f(a)$ at each $x \in A$, for some appropriately chosen $a \in A$. Such an explanation reinforces the intuition behind the definition and aids beginners in understanding how the abstract concept manifests in familiar settings.

\section{Main Theorem}\label{sec:main_theorem}
\begin{theorem}[Lawvere's Fixed-Point Theorem]\label{thm:lawvere}
Let \(\mathcal{C}\) be a Cartesian Closed Category, and let \(f: A \to B^A\) be a point-surjective morphism between objects \(A\) and \(B\). Then for every morphism \(g: B \to B\), there exists a fixed point \(b: 1 \to B\) satisfying
\[
g \circ b = b.
\]
\end{theorem}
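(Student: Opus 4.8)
The plan is to carry out a categorical Cantor-style diagonal argument. I would build a single morphism $A \to B$ that internally computes $x \mapsto g\bigl(\mathrm{ev}(f(x),x)\bigr)$, push it through the point-surjectivity of $f$ to obtain a distinguished point $a : 1 \to A$ that ``names'' this morphism, and then show that evaluating that name at $a$ itself produces the required fixed point $b : 1 \to B$.

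Concretely, first I would form the diagonal $\Delta_A = \langle \mathrm{id}_A,\mathrm{id}_A\rangle : A \to A\times A$, which exists by the universal property of products (Definition \ref{def:product}), and use the evaluation map of Definition \ref{def:exp_detail} to define
\[
q \;=\; \mathrm{ev}\circ(f\times\mathrm{id}_A)\circ\Delta_A \;:\; A \longrightarrow B,
\]
the internalization of $x\mapsto f(x)(x)$. Twisting by the given endomorphism, set $\varphi = g\circ q : A \to B$. Next I would apply point-surjectivity of $f$ (Definition \ref{def:point-surj_detail}) to $\varphi$, obtaining a point $a : 1 \to A$ such that $\varphi = \mathrm{ev}\circ\bigl\langle (f\circ a)\circ{!_A},\,\mathrm{id}_A\bigr\rangle$; informally, $\varphi(x) = \mathrm{ev}(f(a),x)$ for every $x$, which is the reading made explicit in the supplementary example above.

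The payoff step defines $b = q\circ a : 1 \to B$ and checks $g\circ b = b$ by evaluating the naming identity at the point $a$. On one hand, precomposing $\varphi = \mathrm{ev}\circ\langle (f\circ a)\circ{!_A},\,\mathrm{id}_A\rangle$ with $a$ and using ${!_A}\circ a = \mathrm{id}_1$ (uniqueness of arrows into the terminal object, Definition \ref{def:terminal}) gives $\varphi\circ a = \mathrm{ev}\circ\langle f\circ a,\,a\rangle$; on the other hand, the product identities $\Delta_A\circ a = \langle a,a\rangle$ and $(f\times\mathrm{id}_A)\circ\langle a,a\rangle = \langle f\circ a,\,a\rangle$ show $q\circ a = \mathrm{ev}\circ\langle f\circ a,\,a\rangle$ as well, so $\varphi\circ a = q\circ a = b$. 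But also $\varphi\circ a = g\circ q\circ a = g\circ b$ directly from $\varphi = g\circ q$. Comparing the two computations yields $b = g\circ b$, as desired.

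I expect the only genuine friction to be bookkeeping: keeping the coherence isomorphism $1\times A\cong A$ that is implicit in the statement of point-surjectivity consistent with the composites above, and being careful with how the diagonal interacts with the product pairing. The conceptual heart of the argument — the step that cannot be reduced to calculation — is the choice to feed the point-surjectivity hypothesis not $g$ itself nor a projection but the diagonalized, $g$-twisted morphism $\varphi$; this is precisely where the self-reference that powers the theorem enters.
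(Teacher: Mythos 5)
Your proof is correct and follows the same overall strategy as the paper: feed the $g$-twisted diagonal morphism to the point-surjectivity hypothesis, extract a point $a: 1 \to A$, and evaluate at $a$ to obtain the fixed point. In fact your write-up is more complete than the paper's own proof in two respects. First, the paper defines $h := g \circ \mathrm{ev} \circ (f \times \mathrm{id}_A)$ and asserts $h: A \to B$, but $\mathrm{ev}\circ(f\times\mathrm{id}_A)$ has domain $A \times A$; your explicit precomposition with $\Delta_A$ is exactly the missing ingredient that makes the types check. Second, the paper concludes $g\circ b = b$ from ``$b = g\circ\mathrm{ev}\circ(f\times\mathrm{id}_A)$'' by a bare appeal to ``associativity and naturality,'' without ever evaluating at the point $a$; your two-sided computation of $\varphi\circ a$ (once as $g\circ q\circ a = g\circ b$, once as $\mathrm{ev}\circ\langle f\circ a,\,a\rangle = q\circ a = b$) is the step that actually closes the argument, and it is precisely the step the paper elides. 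The bookkeeping concerns you flag (the coherence isomorphism $1\times A\cong A$ hidden in the statement of point-surjectivity, and the interaction of $\Delta_A$ with pairing) are real but routine; your identification of the diagonalized, $g$-twisted morphism $\varphi$ as the correct input to point-surjectivity is the conceptual core of Lawvere's argument, and you have it exactly right.
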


\section{Proof of the Main Theorem}\label{sec:proof_main}
We outline the proof of Theorem \ref{thm:lawvere}, which relies on Lemma \ref{lem:currying} (the universality of currying) and Lemma \ref{lem:diagonal} (the diagonal lemma).

\begin{proof}
Let \(g: B \to B\) be arbitrary.  
Define
\[
h := g \circ \mathrm{ev} \circ (f \times \mathrm{id}_A) : A \to B.
\]
Since \(f\) is point-surjective, by Lemma \ref{lem:point_surjectivity_existence},  
there is a unique \(a: 1 \to A\) such that
\[
h = \mathrm{ev} \circ (f \times a).
\]
Then define
\[
b := \mathrm{ev} \circ (f \times a) : 1 \to B.
\]
Because \(h = b\), from the definition of \(h\) we get
\[
b = g \circ \mathrm{ev} \circ (f \times \mathrm{id}_A).
\]
Using associativity of composition and the naturality of the evaluation map, we obtain
\[
g \circ b = b,
\]
showing that \(b\) is indeed a fixed point of \(g\).
\end{proof}

\paragraph{Commutative Diagram and Explanation}
\[
\begin{tikzcd}[row sep=large, column sep=large]
A \times A \arrow[r, "f \times \mathrm{id}_A"] \arrow[d, "\mathrm{id}_A \times a"'] 
& B^A \times A \arrow[r, "\mathrm{ev}"] \arrow[d, equal] 
& B \\
A \times 1 \arrow[r, "\cong"'] 
& A \arrow[ur, "b = \mathrm{ev}\circ(f\times a)"'] 
& 
\end{tikzcd}
\]
\textbf{Diagram Explanation:}
- \(A \times A\) is mapped to \(B^A \times A\) via \(f \times \mathrm{id}_A\).
- The arrow \(\mathrm{id}_A \times a\) “collapses” the second factor using a point \(a: 1 \to A\).
- The evaluation map \(\mathrm{ev}\) sends elements of \(B^A \times A\) into \(B\), constructing the fixed point \(b\).
- This diagram visually demonstrates how the construction of \(b\) ensures \(g(b) = b\).

\paragraph{Intuitive Explanation}
Intuitively, the theorem states that for any endomorphism \(g: B \to B\), one can construct a self-referential morphism \(b\) (a fixed point) via the point-surjective map \(f\). In the category of sets, for instance, \(B^A\) is the set of all functions from \(A\) to \(B\). The evaluation map \(\mathrm{ev}(f,a)\) applies \(f\) to \(a\). By picking a particular point \(a\in A\) for each desired map \(h\), we factor through \(f\) and produce the fixed point of \(g\). This resembles the idea of a fixed-point combinator (Y combinator) in programming languages, where self-application ensures recursive definitions exist.

\section{Lemmas}\label{sec:lemmas}

\subsection{Universality of Currying}
\begin{lemma}[Universality of Currying]\label{lem:currying}
Let \(\mathcal{C}\) be a Cartesian Closed Category, i.e.:
\begin{enumerate}
  \item \(\mathcal{C}\) has a terminal object \(1\).
  \item \(\mathcal{C}\) has products for any objects \(A,B\).
  \item \(\mathcal{C}\) has exponential objects \(B^A\) and an evaluation map
    \(\mathrm{ev}: B^A \times A \to B\). For any object \(C\) and morphism
    \(f: C \times A \to B\), there is exactly one \(\tilde{f}: C \to B^A\) with
    \(\mathrm{ev} \circ (\tilde{f} \times \mathrm{id}_A) = f\).
\end{enumerate}
Then for a given \(f: C \times A \to B\), the morphism \(\tilde{f}: C \to B^A\) is unique. If \(\tilde{f}_1,\tilde{f}_2: C \to B^A\) both satisfy
\[
\mathrm{ev} \circ (\tilde{f}_1 \times \mathrm{id}_A) = f 
\quad \text{and} \quad 
\mathrm{ev} \circ (\tilde{f}_2 \times \mathrm{id}_A) = f,
\]
then \(\tilde{f}_1 = \tilde{f}_2\).
\end{lemma}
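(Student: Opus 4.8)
The plan is to recognize that this statement is precisely the uniqueness half of the defining universal property of the exponential object, so that the proof reduces to a careful invocation of the third hypothesis of the lemma rather than to any fresh construction. First I would fix the data: objects \(A, B, C\), the morphism \(f : C \times A \to B\), and two candidates \(\tilde f_1, \tilde f_2 : C \to B^A\), each of which, by assumption, satisfies \(\mathrm{ev} \circ (\tilde f_i \times \mathrm{id}_A) = f\). I would then make explicit that each \(\tilde f_i \times \mathrm{id}_A : C \times A \to B^A \times A\) is the morphism furnished by the universal property of the product (Definition \ref{def:product}) applied to \(\tilde f_i \circ \pi_C\) and \(\pi_A\), so that both composites \(\mathrm{ev} \circ (\tilde f_i \times \mathrm{id}_A)\) are legitimate morphisms \(C \times A \to B\) sharing the domain and codomain of \(f\).

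Next I would appeal directly to the third hypothesis of the lemma, which asserts that there is \emph{exactly one} morphism \(g : C \to B^A\) with \(\mathrm{ev} \circ (g \times \mathrm{id}_A) = f\). Since \(\tilde f_1\) and \(\tilde f_2\) both enjoy this property, the uniqueness clause forces \(\tilde f_1 = \tilde f_2\), and the argument is complete. No appeal to the terminal object, to naturality of \(\mathrm{ev}\), or to associativity of composition is required; this is a one-line deduction once the types are tracked correctly.

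The expected obstacle is therefore not computational but conceptual: one must resist the temptation to \emph{derive} uniqueness from the mere \emph{existence} of curried morphisms, which cannot be done in general, since injectivity and surjectivity of the uncurrying operation \(g \mapsto \mathrm{ev} \circ (g \times \mathrm{id}_A)\) are logically independent, and it is exactly the injectivity that this lemma records. If a more conceptual packaging is wanted, I would alternatively phrase the exponential's universal property as the statement that the uncurrying map
\[
\Phi_{C} : \mathrm{Hom}_{\mathcal{C}}(C, B^A) \longrightarrow \mathrm{Hom}_{\mathcal{C}}(C \times A, B), \qquad g \longmapsto \mathrm{ev} \circ (g \times \mathrm{id}_A),
\]
is a bijection; the content of the lemma is then precisely that \(\Phi_{C}\) is injective, with \(\tilde f_1 = \tilde f_2 = \Phi_{C}^{-1}(f)\). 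I would close by remarking that this injectivity is what makes the notation \(\tilde f\) for the curried morphism unambiguous, as used in Definition \ref{def:currying} and in the proof of Theorem \ref{thm:lawvere}.
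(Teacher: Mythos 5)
Your proposal is correct and follows the same route as the paper's own proof: both simply invoke the uniqueness clause of the exponential object's universal property, which is stated as hypothesis (3) of the lemma, to conclude \(\tilde f_1 = \tilde f_2\). The additional framing via the injectivity of the uncurrying map \(\Phi_C\) is a nice clarification but does not change the substance of the argument.
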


\begin{proof}
By definition of a Cartesian Closed Category, for each \(f: C \times A \to B\) there is a unique morphism \(\tilde{f}: C \to B^A\) with
\[
\mathrm{ev} \circ (\tilde{f} \times \mathrm{id}_A) = f.
\]
If \(\tilde{f}_1\) and \(\tilde{f}_2\) both satisfy this condition, then by the uniqueness part of the universal property, \(\tilde{f}_1 = \tilde{f}_2\). Thus \(\tilde{f}\) is uniquely determined.
\end{proof}

\paragraph{Commutative Diagram and Explanation}
\[
\begin{tikzcd}[row sep=large, column sep=large]
C \times A \arrow[r, "f"] \arrow[d, "\tilde{f} \times \mathrm{id}_A"'] & B \\
B^A \times A \arrow[ur, "\mathrm{ev}"'] &
\end{tikzcd}
\]
\textbf{Explanation:}
- \(f: C \times A \to B\) factors through \(\tilde{f}: C \to B^A\) uniquely.
- The universal property of the exponential object ensures the factorization is unique, meaning \(\tilde{f}_1 = \tilde{f}_2\) if both recreate \(f\).

\subsection{Diagonal Lemma}
\begin{lemma}[Diagonal Lemma]\label{lem:diagonal}
Suppose \(f: A \to B^A\) is point-surjective (Lemma \ref{lem:point_surjectivity_existence}). That is, for any \(g': A \to B\), there is a unique \(a: 1 \to A\) such that
\[
g' = \mathrm{ev} \circ (f \times a).
\]
Then for any morphism
\[
g: B \to B,
\]
we can construct a fixed point of \(g\) by considering
\[
h := g \circ \mathrm{ev} \circ (f \times \mathrm{id}_A).
\]
By point-surjectivity, there is a unique \(a: 1 \to A\) with
\[
h = \mathrm{ev} \circ (f \times a).
\]
Define
\[
b := \mathrm{ev} \circ (f \times a).
\]
Then \(b\) is a fixed point of \(g\), i.e. \(g \circ b = b\).
\end{lemma}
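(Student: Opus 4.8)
The plan is to run a categorical version of Cantor's diagonal argument. From the endomorphism $g$ I will manufacture one ``diagonal'' morphism $A\to B$, push it back through the point-surjective $f$ to extract a distinguished point $a$ of $A$, and then read off the fixed point of $g$ by evaluating at $a$ in \emph{both} of the two $A$-slots at once. This is exactly the mechanism behind Theorem~\ref{thm:lawvere}, so this lemma is really the computational heart of that theorem, and the proof of the main theorem will be essentially this argument repackaged.

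First I would make the diagonal explicit. As written, $g\circ\mathrm{ev}\circ(f\times\mathrm{id}_A)$ is a morphism $A\times A\to B$, so to obtain an honest $h\colon A\to B$ I precompose with the diagonal $\Delta_A=\langle\mathrm{id}_A,\mathrm{id}_A\rangle\colon A\to A\times A$ and set $h:=g\circ\mathrm{ev}\circ(f\times\mathrm{id}_A)\circ\Delta_A$; elementwise, $h(x)=g\bigl(\mathrm{ev}(f(x),x)\bigr)$, and the insertion of $\Delta_A$ is precisely the self-reference that drives the argument. Next I would invoke point-surjectivity of $f$ (Definition~\ref{def:point-surj_detail}) applied to this particular $h$, obtaining a point $a\colon 1\to A$ with $h=\mathrm{ev}\circ(f\times a)$. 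Finally I would take the candidate fixed point to be the value of the diagonal map at that point, $b:=h\circ a\colon 1\to B$.

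The remaining work — and the only place where genuine care is needed — is the verification $g\circ b=b$, which I would get by computing $h\circ a$ in two ways, i.e.\ by precomposing with the point $a\colon 1\to A$. On one hand, precomposing the identity $h=\mathrm{ev}\circ(f\times a)$ with $a$ and simplifying (functoriality of $\times$, the right-unit isomorphism $A\times 1\cong A$, and $!_A\circ a=\mathrm{id}_1$ since $1$ is terminal) collapses both sides to the single morphism $\mathrm{ev}\circ\langle f\circ a,a\rangle\colon 1\to B$ — elementwise ``$f(a)(a)$'' — so $b=h\circ a=\mathrm{ev}\circ\langle f\circ a,a\rangle$. On the other hand, precomposing the defining equation $h=g\circ\mathrm{ev}\circ(f\times\mathrm{id}_A)\circ\Delta_A$ with $a$ and simplifying in the same way gives $h\circ a=g\circ\mathrm{ev}\circ\langle f\circ a,a\rangle=g\circ b$. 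Comparing the two expressions for $h\circ a$ yields $g\circ b=b$.

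The obstacle I anticipate is entirely diagrammatic rather than conceptual: keeping the two copies of $A$ and the coherence isomorphisms $A\times 1\cong A\cong 1\times A$ straight while carrying out these precompositions, and being honest that the vague phrase ``naturality of the evaluation map'' invoked in the sketch of Theorem~\ref{thm:lawvere} amounts to no more than functoriality of products together with the universal equation defining $\mathrm{ev}$. Working with generalized elements (or the internal language of $\mathcal{C}$) makes the bookkeeping routine. I would also flag two minor points: the argument uses only the \emph{existence} of $a$, never its uniqueness; and it is insensitive to the precise reading of ``point-surjective'' — whether $h(x)=\mathrm{ev}(f(a),x)$ or $h(x)=\mathrm{ev}(f(x),a)$ — because the identity is only ever evaluated at $x=a$, where the two readings coincide on $\mathrm{ev}\circ\langle f\circ a,a\rangle$.
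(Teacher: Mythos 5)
Your proof is correct, and it is the diagonal argument this lemma intends --- but it is substantially more careful than the paper's own proof, which does not typecheck as written. In the paper, \(\mathrm{ev}\circ(f\times\mathrm{id}_A)\) has domain \(A\times A\), so the paper's \(h\) is not literally a morphism \(A\to B\); likewise \(\mathrm{ev}\circ(f\times a)\) has domain \(A\times 1\cong A\), so the paper's \(b\) is not a point \(1\to B\); and the concluding step ``\(g\circ b=b\) by associativity and naturality of \(\mathrm{ev}\)'' is asserted with no computation. Your three repairs are exactly what is needed to make the statement and proof rigorous: precomposing with \(\Delta_A\) so that \(h:=g\circ\mathrm{ev}\circ(f\times\mathrm{id}_A)\circ\Delta_A\) is an honest morphism \(A\to B\) (this insertion is the self-referential step --- without it there is no diagonal in the ``diagonal lemma''); defining the fixed point as \(b:=h\circ a\colon 1\to B\) rather than as \(\mathrm{ev}\circ(f\times a)\); and computing \(h\circ a\) in two ways, once via the point-surjectivity equation to get \(b=\mathrm{ev}\circ\langle f\circ a,a\rangle\) and once via the definition of \(h\) to get \(h\circ a=g\circ\mathrm{ev}\circ\langle f\circ a,a\rangle=g\circ b\). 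Your side remarks are also correct and worth keeping: only the \emph{existence} of \(a\) is used, and indeed the lemma's claim that \(a\) is unique overstates what Definition~\ref{def:point-surj_detail} actually provides (existence only); the only genuinely fiddly part is the bookkeeping with \(A\times 1\cong A\), which your generalized-element computation handles correctly.
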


\paragraph{Proof Sketch}
The lemma essentially says that for any endomorphism \(g: B \to B\), if you have a point-surjective map \(f: A \to B^A\), you can build \(h: A \to B\) by composing \(g\) with the evaluation map and \(f\). By point-surjectivity, \(h\) must factor through a point \(a: 1 \to A\). The resulting morphism \(b\) is shown to satisfy \(g(b) = b\).

\paragraph{Commutative Diagram and Explanation}
\[
\begin{tikzcd}[row sep=large, column sep=large]
A \times A \arrow[r, "f \times \mathrm{id}_A"] \arrow[d, "\mathrm{id}_A \times a"'] 
& B^A \times A \arrow[r, "\mathrm{ev}"] \arrow[d, equal] 
& B \\
A \times 1 \arrow[r, "\cong"'] 
& A \arrow[ur, "b = \mathrm{ev}\circ(f\times a)"'] 
&
\end{tikzcd}
\]
\textbf{Explanation:}
- \(A \times A\) is mapped into \(B^A \times A\) by applying \(f\) in the first component and \(\mathrm{id}_A\) in the second.
- The arrow \(\mathrm{id}_A \times a\) picks out a specific element \(a: 1 \to A\).
- The evaluation map \(\mathrm{ev}\) yields \(b\) in \(B\).
- By construction, \(b\) becomes a fixed point of \(g\).

\subsection{Fixed-Point Construction Lemma}
\begin{lemma}[Fixed-Point Construction Lemma]\label{lem:fixed_point_construction}
Let \(f: A \to B^A\) be point-surjective, and let
\[
g: B \to B
\]
be a morphism. Define
\[
h := g \circ \mathrm{ev} \circ (f \times \mathrm{id}_A) : A \to B.
\]
By point-surjectivity (Lemma \ref{lem:point_surjectivity_existence}), there is a unique \(a: 1 \to A\) such that
\[
h = \mathrm{ev} \circ (f \times a).
\]
Then
\[
b := \mathrm{ev} \circ (f \times a)
\]
is a fixed point of \(g\), i.e.
\[
g \circ b = b.
\]
\end{lemma}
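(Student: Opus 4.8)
The plan is to deduce the single identity $g\circ b=b$ by a short diagram chase whose essential move is a \emph{diagonal substitution}: we evaluate both the defining equation of $h$ and the point-surjectivity equation for $h$ at the very point $a:1\to A$ that point-surjectivity produces, so that the self-application term $\mathrm{ev}\circ\langle f\circ a,\,a\rangle$ turns up twice---once as $b$ itself and once as the input to $g$. Throughout I read the expression $\mathrm{ev}\circ(f\times a)$ appearing in the statement as the morphism $A\to B$ sending (on points) $x$ to $\mathrm{ev}(f(x),a)$, i.e.\ the morphism obtained from $\mathrm{ev}\circ\langle f,\mathrm{id}_A\rangle$ by substituting $a$ into the second argument and using $A\cong A\times 1$; here $\langle-,-\rangle$ denotes the pairing supplied by the universal property of the product (Definition~\ref{def:product}). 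Under this convention, precomposing $\mathrm{ev}\circ(f\times a)$ with the point $a$ yields exactly $\mathrm{ev}\circ\langle f\circ a,\,a\rangle:1\to B$.

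First I would fix the data as in the statement: put $h:=g\circ\mathrm{ev}\circ\langle f,\mathrm{id}_A\rangle:A\to B$, apply point-surjectivity of $f$ (Definition~\ref{def:point-surj_detail}, equivalently Lemma~\ref{lem:point_surjectivity_existence}) to the morphism $h$ to obtain a point $a:1\to A$ with $h=\mathrm{ev}\circ(f\times a)$, and set $b:=\mathrm{ev}\circ\langle f\circ a,\,a\rangle:1\to B$. Precomposing the equation $h=\mathrm{ev}\circ(f\times a)$ with $a$ then gives $h\circ a=\mathrm{ev}\circ\langle f\circ a,\,a\rangle=b$, that is, $b=h\circ a$. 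This is the step that encodes the self-reference, and it uses nothing beyond the universal property of the product and the identification $A\cong A\times 1$.

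Next I would run the computation. Expanding the definition of $h$ inside $h\circ a$, using associativity of composition (as already used in the proof of Theorem~\ref{thm:lawvere}) and the fact that $\langle f,\mathrm{id}_A\rangle\circ a=\langle f\circ a,\,a\rangle$ (again the universal property of the product), one obtains $h\circ a=g\circ\mathrm{ev}\circ\langle f,\mathrm{id}_A\rangle\circ a=g\circ\bigl(\mathrm{ev}\circ\langle f\circ a,\,a\rangle\bigr)=g\circ b$. Combining this with $b=h\circ a$ from the previous step gives $g\circ b=b$, which is the assertion. I would also note that the word ``unique'' attached to $a$ is not used here---any point with $h=\mathrm{ev}\circ(f\times a)$ works---so that uniqueness, if desired for the statement, is a separate observation following from Lemma~\ref{lem:currying}.

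The only obstacle I anticipate is bookkeeping rather than mathematics: one must pin down once and for all how ``$f\times a$'' is to be parsed against the coherence isomorphism $A\times 1\cong A$, so that ``precompose with $a$'' genuinely produces the diagonal pair $\langle f\circ a,\,a\rangle$ rather than a mismatched composite, and check the small fact $!_A\circ a=\mathrm{id}_1$ (true since $1$ is terminal) on which this rests. Once those conventions are fixed the proof is two one-line computations; in particular the lemma is essentially a repackaging of the Diagonal Lemma (Lemma~\ref{lem:diagonal}), so an equally legitimate and shorter route is simply to derive it from that lemma.
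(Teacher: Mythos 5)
Your proposal is correct, and it follows the paper's overall decomposition (form \(h := g\circ\mathrm{ev}\circ\langle f,\mathrm{id}_A\rangle\), apply point-surjectivity to obtain \(a\), set \(b\)) --- but it supplies the one step the paper's own proof omits. The paper's argument ends with ``\(b = g \circ \mathrm{ev} \circ (f\times\mathrm{id}_A)\); by associativity of composition and the naturality of \(\mathrm{ev}\) we deduce \(g\circ b = b\),'' which does not follow as written: at that point \(b\) has been identified with the morphism \(h : A\to B\) rather than with a point \(1\to B\), and no appeal to associativity turns \(b = g\circ(\cdots)\) into \(g\circ b = b\). Your diagonal substitution --- precomposing both sides of \(h=\mathrm{ev}\circ(f\times a)\) with the point \(a\) itself, so that the self-application term \(\mathrm{ev}\circ\langle f\circ a,\,a\rangle\) appears once as \(b\) and once as the argument of \(g\) --- is exactly the missing computation, and it simultaneously repairs the type mismatch in the statement (the fixed point is \(b = h\circ a : 1\to B\), not the morphism \(A\to B\) that \(\mathrm{ev}\circ(f\times a)\) literally denotes). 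Your side remarks are also right: the uniqueness of \(a\) is never used, and Lemma~\ref{lem:point_surjectivity_existence} in fact only guarantees existence, so ``unique'' in the statement is an overclaim; and the lemma is indeed a repackaging of Lemma~\ref{lem:diagonal}. One caveat: you parse \(\mathrm{ev}\circ(f\times a)\) as \(x\mapsto \mathrm{ev}(f(x),a)\), whereas the set-theoretic gloss accompanying Definition~\ref{def:point-surj_detail} reads it as \(x\mapsto \mathrm{ev}(f(a),x)\); the two readings happen to agree at the only place you evaluate, namely \(x=a\), so your argument survives either convention, but you should fix one explicitly and use it throughout.
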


\paragraph{Proof}
\begin{proof}
1. By point-surjectivity of \(f\), for any \(g: A \to B\), there exists \(a: 1 \to A\) such that
\[
g = \mathrm{ev} \circ (f \times a).
\]
In particular, define
\[
h := g \circ \mathrm{ev} \circ (f \times \mathrm{id}_A).
\]
There is an \(a: 1 \to A\) for which \(h = \mathrm{ev} \circ (f \times a)\).

2. Let
\[
b := \mathrm{ev} \circ (f \times a).
\]
Then \(b = h\). By definition,
\[
b = g \circ \mathrm{ev} \circ (f \times \mathrm{id}_A).
\]

3. By associativity of composition and the naturality of \(\mathrm{ev}\), we deduce
\[
g \circ b = b.
\]
Thus \(b\) is a fixed point of \(g\).
\end{proof}

\paragraph{Commutative Diagram and Explanation}
\[
\begin{tikzcd}[row sep=large, column sep=large]
A \times A \arrow[r, "f \times \mathrm{id}_A"] \arrow[d, "\mathrm{id}_A \times a"'] 
& B^A \times A \arrow[r, "\mathrm{ev}"] \arrow[d, equal] 
& B \\
A \times 1 \arrow[r, "\cong"'] 
& A \arrow[ur, "b = \mathrm{ev}\circ(f\times a)"'] &
\end{tikzcd}
\]
\textbf{Explanation:}
- \(A \times A\) goes to \(B^A \times A\) via \(f \times \mathrm{id}_A\).
- \(\mathrm{id}_A \times a\) reduces the second factor to a point \(a\).
- The evaluation map yields \(b\), which is shown to be a fixed point of \(g\).

\paragraph{Intuitive Interpretation}
This lemma describes how, given a point-surjective map \(f\), one can construct a fixed point of any endomorphism \(g: B \to B\). The structure is reminiscent of the Y combinator in programming, wherein self-application logic is used to form a fixed point of a given functional definition.

\newpage
\section{Applications and Discussion}\label{sec:applications}

Lawvere's Fixed-Point Theorem serves as a powerful tool in categorical approaches to self-referential or recursive definitions, guaranteeing the presence of fixed points in various contexts. In this section, we discuss the theoretical significance of this fixed-point theorem, review specific examples of applications, and sketch ongoing advanced research and open problems. We employ intuitive explanations, concrete examples, and commutative diagrams to illustrate these ideas.

\subsection{Theoretical Significance of the Fixed-Point Theorem}
Lawvere's Fixed-Point Theorem ensures that any morphism \(g: B \to B\) possesses a fixed point \(b: 1 \to B\). The ability to guarantee self-reference or recursive definitions in a rigorous manner is highly significant in multiple fields. For instance, in programming languages, the existence of a fixed-point combinator (such as the Y combinator) underpins the construction of self-applicable functions, making it possible to define recursive functions \cite{Lawvere1969, Yanofsky2003}. In logic and type theory, the theorem supports the existence of self-referential propositions and the formulation of recursive types.

\subsection{Applications in Type Theory and Homotopy Type Theory}
Lawvere's Fixed-Point Theorem has deep influence on dependent type theories and homotopy type theories, shedding light on identity types, recursive definitions, and self-referential constructions. For example, Awodey and Warren \cite{AwodeyWarren2009} demonstrate how this fixed-point approach clarifies the structure of identity types, providing a rigorous treatment of recursion in type theory. Furthermore, Voevodsky's Univalent Foundations and Lurie's Higher Topos Theory \cite{Lurie2009} reinterpret self-reference and fixed points within new types of frameworks, enabling a broader perspective that is not limited to conventional set theory. These developments effectively embed the viewpoint of Lawvere's theorem into cutting-edge research on type dependence, identity types, and self-applicable functions \cite{AwodeyWarren2009, Jacobs1993, Norell2007}.

\subsection{Developments in Higher Category Theory and Homotopy Theory}
Lawvere's Fixed-Point Theorem also generalizes the notion of fixed points in the context of abstract homotopy theory and higher category theory. Lurie's \emph{Higher Topos Theory} \cite{Lurie2009} and Quillen's \emph{Homotopical Algebra} \cite{Quillen1967, Brown1973} formulate new frameworks in which self-referential structures are realized at higher categorical levels. This helps interpret the idea that “deformable” objects or spaces can nonetheless contain invariant cores. The theorem extends classical fixed-point arguments, demonstrating how universal self-reference phenomena arise in quite general categorical contexts.

\newpage
\subsection{Other Advanced Studies}\label{sec:advanced_applications}
Although this paper has focused on works directly cited in the main discussion, many other references significantly impact advanced applications of Lawvere's Fixed-Point Theorem.

For instance, Soto-Andrade and Varela \cite{SotoAndrade1984} explore alternative interpretations and generalizations of the theorem, illuminating new perspectives on self-referential structures. McCallum \cite{McCallum2020} highlights connections to classical theorems (e.g., deriving Brouwer's Fixed-Point Theorem as a corollary), emphasizing broader ties with analysis and topology. Work by Clementino and Hofmann \cite{ClementinoHofmann2009} and Hofmann and Stubbe \cite{HofmannStubbe2007} extends these notions to topological categories and enriched category settings, showing that Lawvere's approach to fixed points can be leveraged to analyze completeness and local structure in a wide variety of mathematical settings.

On the computational side, Rijke et al. \cite{RijkeBakkePrietoCubidesStenholm} formalize Lawvere's Fixed-Point Theorem in Agda-Unimath, offering constructive examples in proof assistants and highlighting the theorem’s relevance to computer science. In homotopy type theory, Licata and Shulman \cite{LicataShulman2013}, BCHM et al. \cite{BCHM16}, and others show how recursive definitions and fixed-point constructions can be interpreted in models based on cubical sets or univalent foundations, again tracing the conceptual lineage back to Lawvere’s theorem.

Finally, earlier foundational works by Martin-Löf \cite{MartinLof1975, MartinLof1982}, Barendregt \cite{Barendregt1992}, Nordström et al. \cite{NordstromPeterssonSmith1990}, and BGL17 \cite{BGL17} provide the background to connect these ideas to type theory and lambda calculus, underscoring how fixed-point operators in such systems build on precisely these sorts of universal properties. Indeed, the interpretation of typed or untyped lambda calculus recursors finds a natural conceptual home under the umbrella of Lawvere's theorem when viewed categorically.

Hence, Lawvere's Fixed-Point Theorem applies well beyond the scope of purely “categorical” fixed-point theory—finding roles in type theory, homotopy type theory, higher categories, and bridging classical analysis with the broader world of abstract categorical approaches.

\newpage
\subsection{Advanced Research and Future Directions}
Recent research has extended Lawvere's Fixed-Point Theorem to more general frameworks, exploring self-referential structures and flexible formulations of fixed points \cite{Kapulkin2015, KapulkinLumsdaine2016, RijkeBakkePrietoCubidesStenholm}. Topics of interest include:
\begin{itemize}
  \item “Weak fixed points” in type theory and homotopy type theory.
  \item Generalizing self-referential constructions in higher category theory.
  \item Integrating these ideas into practical programming languages and logical systems.
\end{itemize}
Open issues remain, such as avoiding paradoxes arising from self-reference, verifying computability, and preserving coherence in these extended theories. These constitute the next major challenges for future research.

\subsection{Diagrammatic Visualization of Fixed-Point Constructions}
Below is a fundamental diagram illustrating the construction of fixed points under Lawvere's Fixed-Point Theorem. It shows how abstract operations yield a morphism \(b\) such that \(g(b) = b\).
\[
\begin{tikzcd}[row sep=large, column sep=large]
A \times A \arrow[r, "f \times \mathrm{id}_A"] \arrow[d, "\mathrm{id}_A \times a"'] 
& B^A \times A \arrow[r, "\mathrm{ev}"] \arrow[d, equal] 
& B \\
A \times 1 \arrow[r, "\cong"'] 
& A \arrow[ur, "b = \mathrm{ev}\circ(f\times a)"'] 
&
\end{tikzcd}
\]
\textbf{Explanation:}
\begin{itemize}
  \item \(A \times A\) is mapped to \(B^A \times A\) via \(f\) and \(\mathrm{id}_A\).
  \item The arrow \(\mathrm{id}_A \times a\) employs the unique point \(a: 1 \to A\) to reduce to \(A\).
  \item The evaluation map \(\mathrm{ev}\) yields a morphism \(b\in B\) that is fixed by \(g\).
\end{itemize}

\subsection{Summary and Outlook}
Lawvere's Fixed-Point Theorem not only furnishes a fundamental result ensuring self-referential fixed points in Cartesian Closed Categories but also drives unification in mathematics, logic, and computer science regarding theories of recursion and fixed points. As we have seen, it serves as a backbone for discussing the existence of fixed-point combinators, the semantics of recursive types in programming languages, and the deeper explorations within type theory and homotopy type theory.

However, much remains to be done. Potential future directions include:
\begin{itemize}
  \item Systematically examining “weak fixed points” and flexible self-referential structures in type and homotopy type theories.
  \item Extending fixed-point arguments to higher categorical levels with even broader applicability.
  \item Integrating these theoretical findings into real-world language design and formal proof systems.
\end{itemize}
These open questions demonstrate that Lawvere's Fixed-Point Theorem, through its universal perspective on self-reference, remains central in pushing the boundaries of multiple research domains.

\newpage
\section{Conclusion}\label{sec:conclusion}
In this paper, we introduced the basic categorical concepts—terminal objects, products, exponential objects, evaluation maps, currying, point-surjective morphisms—and used them systematically to prove Lawvere's Fixed-Point Theorem. Through lemmas and commutative diagrams, we provided both a rigorous and intuitive presentation of self-reference and fixed points, highlighting their relevance to fixed-point combinators in programming languages and to type theory and homotopy type theory.

The main results of this paper are:
\begin{itemize}
  \item We showed that Lawvere's Fixed-Point Theorem guarantees that for any morphism \(g: B \to B\) in a Cartesian Closed Category, a fixed point \(b: 1 \to B\) exists.
  \item We explained the notions of terminal objects, products, and exponential objects in detail, illustrating how their universal properties lead to the possibility of constructing self-referential or recursive structures.
  \item We surveyed how these ideas, originally presented by Lawvere in 1969 \cite{Lawvere1969}, have influenced modern developments in type theory, homotopy type theory, and higher category theory. We discussed wide-ranging applications, including references \cite{Yanofsky2003, AwodeyWarren2009, Lurie2009, Kapulkin2015}, and noted open problems and active research directions.
\end{itemize}

Looking ahead, important goals include further generalization of recursive self-reference, rigorous formalization of weak fixed points, and the integration of these ideas into practical programming languages and logical systems. The universal viewpoint offered by Lawvere's Fixed-Point Theorem will undoubtedly continue playing a central role in unifying the theoretical underpinnings of recursion and fixed points across many fields of mathematics, logic, and computer science.

\newpage
\bibliographystyle{plain}
\bibliography{references}

\newpage
\appendix
\section{Appendix: Supplementary Explanations}\label{sec:appendix}
Here, we elaborate on additional details relevant to Lemmas \ref{lem:currying} and \ref{lem:diagonal}, providing extra diagrams and examples as needed.

\subsection{Historical Background of Self-Reference and Diagonal Arguments}
Self-reference and diagonal arguments have a long history in mathematics and logic. Gödel’s incompleteness theorems and Russell’s paradox are classic examples involving self-reference. Lawvere’s Fixed-Point Theorem unifies these diagonal arguments in the abstract setting of category theory, establishing that “any morphism can have a self-referential fixed point” under certain universal conditions. This viewpoint also resonates with the fixed-point combinators in programming languages, where self-application reveals how crucial self-reference is to consistency and the legitimacy of recursive definitions \cite{Lawvere1969, Yanofsky2003}.

\subsection{Significance of the Fixed-Point Theorem and Modern Relevance}
Lawvere’s Fixed-Point Theorem guarantees the existence of fixed points in a form general enough to cover logic, type theory, computation, and homotopy type theory. For instance, in type theory, the theorem helps justify recursive definitions or self-applicable functions \cite{AwodeyWarren2009, Norell2007}. In homotopy type theory, the theorem fosters new interpretations of “weak fixed points” or generalized notions of self-reference \cite{Lurie2009, Shulman2015}.

\subsection{Advanced Research and Open Problems}
Recent investigations focus on extending Lawvere’s result to handle flexible forms of self-reference or to interpret “weakened” fixed points in homotopy type theory and higher category theory \cite{Kapulkin2015, KapulkinLumsdaine2016, RijkeBakkePrietoCubidesStenholm}. These studies consider potential paradoxes, complexities of computability, and the coherence of type-theoretic and categorical frameworks, indicating rich possibilities for future work.

\subsection{Diagrammatic Representations}
The commutative diagrams in this paper serve as an effective method for visualizing abstract definitions. For instance, the uniqueness of terminal objects, the universality of products and exponential objects, and the role of evaluation maps in constructing fixed points are each depicted with diagrams capturing the unique factorization and commutativity properties. Such diagrams enhance intuitive understanding of how the fundamental categorical concepts interlock.

\subsection{Conclusion and Future Outlook}
We have supplemented the main text by discussing background, practical relevance, cutting-edge research, and diagrammatic strategies. Together, these sections show the broad significance of Lawvere’s Fixed-Point Theorem, bridging the gap between abstract categorical formalisms and concrete applications in computer science, type theory, and higher categorical structures. Ongoing research in weak fixed points, higher-dimensional analogs, and real-world computational systems affirms that the theorem continues to inspire new directions in both theory and practice.

\section{Appendix: Basic Lemmas}\label{sec:basic_lemmas}

\subsection{Lemma on the Associativity of Composition\index{Associativity of Composition}}
\begin{lemma}[Associativity of Composition]\label{lem:associativity_composition}
For any morphisms $f: A \to B$, $g: B \to C$, and $h: C \to D$, the composition is associative, that is,
\[
h \circ (g \circ f) = (h \circ g) \circ f.
\]
\end{lemma}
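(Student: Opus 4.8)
The plan is to recall that associativity of composition is one of the defining axioms of a category, so in the fully general setting of an arbitrary category $\mathcal{C}$ there is, strictly speaking, nothing to prove: the equation $h \circ (g \circ f) = (h \circ g) \circ f$ holds by fiat. I would therefore organize the argument by first stating precisely the data of a category (objects, hom-sets, identity morphisms, a partially defined composition operation, the left and right unit laws, and the associativity law), then observing that the hypotheses $f: A \to B$, $g: B \to C$, $h: C \to D$ are exactly what is needed for $g \circ f$, $h \circ g$, and hence both $h \circ (g \circ f)$ and $(h \circ g) \circ f$ to be well-defined morphisms $A \to D$, and finally invoking the associativity axiom directly. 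Since a Cartesian Closed Category is in particular a category (Definition~\ref{def:ccc_detail}), this applies verbatim in the ambient setting of Theorem~\ref{thm:lawvere}.

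Because the paper consistently anchors its abstract notions in the category $\mathbf{Set}$, I would also record the concrete verification there for the reader's benefit: for functions $f, g, h$ and any element $x \in A$, both composites send $x$ to $h\bigl(g(f(x))\bigr)$, so the two functions agree at every point and are therefore equal. This step merely unwinds the definition of set-theoretic composition and appeals to the extensionality of function equality; it is the kind of routine check I would state rather than belabor.

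The only point worth flagging is conceptual rather than technical: one must be clear whether associativity is being assumed as part of the ambient structure — the usual convention, and the one implicit throughout this paper — or whether one is working in a specific concrete category where composition is given by an explicit formula, in which case the axiom is something one verifies (as in the $\mathbf{Set}$ computation above). Either way the proof is immediate. I would include this lemma here only so that the appeals to ``associativity of composition'' in the proof of Theorem~\ref{thm:lawvere} and in Lemma~\ref{lem:fixed_point_construction} have an explicit reference to cite, not because any genuine difficulty is present.
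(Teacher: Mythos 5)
Your proposal is correct and takes essentially the same approach as the paper: both simply observe that associativity is an axiom in the definition of a category, so the equation holds by fiat. The additional $\mathbf{Set}$-level verification and the remark about when the axiom must instead be checked are harmless elaborations beyond what the paper records.
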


\paragraph{Explanation of the Proof Strategy}\index{Proof Strategy}
This lemma states the basic axiom of category theory that morphism composition is associative. Concretely, given three morphisms $f$, $g$, $h$, the result of composing them does not depend on the parenthesization; we can first compose $f$ and $g$ and then compose $h$, or first compose $g$ and $h$ and then compose $f$. In either case, we arrive at the same composite morphism. Below is a concise, formal proof.

\begin{proof}
By the definition of a category, the associativity of composition is an axiom:  
\[
h \circ (g \circ f) = (h \circ g) \circ f.
\]
Thus, there is no further construction needed; it follows directly from the axioms of category theory.
\end{proof}

\paragraph{Intuitive Explanation}\index{Intuitive Explanation}
This is analogous to the associative law of addition in arithmetic, $(a + b) + c = a + (b + c)$. In the same way, whether you write $h \circ (g \circ f)$ or $(h \circ g) \circ f$, you get the same morphism in a category. From a programming perspective, this corresponds to the fact that consecutive function calls can be regrouped without changing the final outcome.

\paragraph{Visualization via Commutative Diagram}
A typical commutative diagram illustrating associativity is as follows:
\[
\begin{tikzcd}[row sep=large, column sep=large]
A \arrow[r, "f"] \arrow[rr, bend right=30, "h\circ(g\circ f)"'] 
\arrow[r, bend left=30, "(h\circ g)\circ f"] 
& B \arrow[r, "g"] \arrow[dr, "h\circ g"'] 
& C \arrow[d, "h"] \\
& & D
\end{tikzcd}
\]
\textbf{Explanation of the Diagram:}
\begin{itemize}
  \item The two curved arrows from $A$ to $D$ (one via $g \circ f$ first, the other via $h \circ g$ first) both represent the same composite morphism because of associativity.
  \item This diagram helps beginners see that changing the order of parenthesization does not affect the ultimate morphism from $A$ to $D$.
\end{itemize}

\subsection{Properties of the Identity Morphism}
\begin{lemma}[Properties of the Identity Morphism]\index{Identity Morphism}\label{lem:identity_properties}
For each object $X$ in a category, there is an identity morphism $\mathrm{id}_X: X \to X$, and for any morphism $f: X \to Y$, the following conditions hold:
\[
f \circ \mathrm{id}_X = f,
\quad
\mathrm{id}_Y \circ f = f.
\]
\end{lemma}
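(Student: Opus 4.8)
The plan is to observe that this statement is not a theorem requiring construction but rather an unpacking of the defining axioms of a category, exactly parallel to the treatment of Lemma~\ref{lem:associativity_composition}. First I would recall the definition of a category $\mathcal{C}$: it consists of a collection of objects, a collection of morphisms with assigned source and target, a composition operation, and for each object $X$ a distinguished morphism $\mathrm{id}_X : X \to X$, subject to two axioms—associativity of composition and the \emph{unit laws}. The statement to be proved is precisely the unit-law axiom, so the entire argument reduces to citing that axiom for the given morphism $f : X \to Y$.

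Concretely, the key steps in order are: (1) fix an arbitrary object $X$ and note that by the category axioms the identity morphism $\mathrm{id}_X : X \to X$ exists and is unique with its property; (2) fix an arbitrary morphism $f : X \to Y$; (3) apply the left unit law to obtain $\mathrm{id}_Y \circ f = f$; (4) apply the right unit law to obtain $f \circ \mathrm{id}_X = f$. Since both equations hold for every such $f$, the lemma follows. No diagram chase or factorization argument is needed, because the universal properties introduced earlier (products, exponentials) play no role here—this is strictly more primitive.

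I do not expect a genuine obstacle, since the content is definitional; the only "difficulty" is presentational, namely making clear to the intended undergraduate audience \emph{why} something so basic is being stated as a lemma at all. I would therefore accompany the short formal proof with an intuitive remark: $\mathrm{id}_X$ behaves like the neutral element $1$ under multiplication or the empty operation under function composition, so precomposing or postcomposing with it leaves any morphism unchanged. If one wished to add rigor beyond the bare axiom, one could additionally remark that the identity is characterized up to equality by these two equations—if $e : X \to X$ satisfies $f \circ e = f$ and $e \circ g = g$ for all composable $f, g$, then taking $f = \mathrm{id}_X$ forces $e = \mathrm{id}_X$—but this uniqueness observation is a supplement, not part of what the lemma asserts.

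\begin{proof}
By the definition of a category, for every object $X$ there is a distinguished identity morphism $\mathrm{id}_X : X \to X$, and the unit axioms state that for any morphism $f : X \to Y$ one has
\[
f \circ \mathrm{id}_X = f \quad\text{and}\quad \mathrm{id}_Y \circ f = f.
\]
Hence the claim follows immediately from the axioms of category theory, with no further construction required.
\end{proof}
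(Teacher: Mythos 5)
Your proposal is correct and takes essentially the same approach as the paper: both simply cite the unit-law axioms from the definition of a category and note that no further construction is needed. The supplementary uniqueness remark you mention is a reasonable addition but, as you say, not part of what the lemma asserts.
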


\paragraph{Explanation of the Proof Strategy}\index{Proof Strategy}
This lemma states the axiom of identity morphisms in category theory, namely that each object $X$ has a do-nothing morphism $\mathrm{id}_X$, and composing it on either side of any morphism $f: X \to Y$ leaves $f$ unchanged. Below is a direct argument based on the category axioms.

\begin{proof}
By the definition of a category, each object $X$ has an identity morphism $\mathrm{id}_X: X \to X$, satisfying
\[
f \circ \mathrm{id}_X = f
\quad
\text{and}
\quad
\mathrm{id}_Y \circ f = f
\]
for any morphism $f: X \to Y$. These conditions are axiomatic; thus, no additional constructive proof is required.
\end{proof}

\paragraph{Intuitive Explanation}\index{Intuitive Explanation}
In the category of sets, for example, $\mathrm{id}_X$ is simply the identity map sending each element $x \in X$ to itself. Composing with this map changes nothing, just like multiplying by 1 in arithmetic.

\subsection{Uniqueness and Properties of the Terminal Object}
\begin{lemma}[Uniqueness of the Terminal Object and Its Morphisms]\index{Terminal Object}\label{lem:terminal_uniqueness}
In a category $\mathcal{C}$, a terminal object $1$ has the property that for any object $X \in \mathcal{C}$, there is exactly one morphism
\[
!_X : X \to 1.
\]
This ensures the morphism $!_X$ is unique and that all objects $X$ “collapse” to $1$ in exactly one way.
\end{lemma}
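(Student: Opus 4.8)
The statement has two parts: the uniqueness of the morphism $!_X : X \to 1$ for each fixed object $X$, and the uniqueness (up to unique isomorphism) of the terminal object itself. The first part requires no work beyond unwinding Definition \ref{def:terminal}: a terminal object admits \emph{exactly one} morphism from every object, so $!_X$ is by fiat the unique such arrow, and the phrase ``all objects $X$ collapse to $1$ in exactly one way'' is merely a restatement of this. So the plan is to spend essentially all the effort on the genuinely content-bearing claim, that any two terminal objects are canonically isomorphic.

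For that, I would run the standard ``universal objects are unique'' argument. First, let $1$ and $1'$ both be terminal objects of $\mathcal{C}$. Applying the defining property of $1'$ to the object $1$ yields a unique morphism $u : 1 \to 1'$; applying the defining property of $1$ to the object $1'$ yields a unique morphism $v : 1' \to 1$. Next I would form the composite $v \circ u : 1 \to 1$ and observe that, because $1$ is terminal, the collection of morphisms $1 \to 1$ has exactly one element. Since $\mathrm{id}_1$ is such a morphism (Lemma \ref{lem:identity_properties}), we are forced to conclude $v \circ u = \mathrm{id}_1$. Symmetrically, using that $1'$ is terminal, $u \circ v = \mathrm{id}_{1'}$. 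Hence $u$ is an isomorphism with inverse $v$, and it is moreover the \emph{unique} isomorphism $1 \to 1'$, since it is already the unique morphism $1 \to 1'$ of any kind. The associativity and identity axioms (Lemmas \ref{lem:associativity_composition} and \ref{lem:identity_properties}) enter only to guarantee the composites $v \circ u$ and $u \circ v$ are well formed and to identify them with identities.

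I do not expect any real obstacle; the sole point demanding care is bookkeeping — keeping straight which object's universal property is invoked at each step, and recognizing that the competition between $v \circ u$ and $\mathrm{id}_1$ for the single available morphism $1 \to 1$ is precisely the mechanism that produces the isomorphism. If desired, I would close with a remark that the identical template — replacing ``unique morphism into'' by ``unique morphism out of'' — establishes uniqueness of initial objects, and that this is the prototype for the uniqueness of every object characterized by a universal property, including the products and exponential objects of Definitions \ref{def:product} and \ref{def:exp_detail}.
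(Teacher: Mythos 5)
Your proposal is correct. For the claim the lemma body actually asserts --- that $!_X$ is the unique morphism $X \to 1$ --- you do exactly what the paper does: observe that it is immediate from Definition \ref{def:terminal}, since ``exactly one morphism'' is the defining clause of a terminal object, and the paper's own proof says nothing more than this. Where you genuinely diverge is that you take the lemma's title (``Uniqueness of the Terminal Object'') at face value and supply the standard argument that any two terminal objects $1$ and $1'$ are isomorphic via a unique isomorphism: the unique arrows $u : 1 \to 1'$ and $v : 1' \to 1$ compose to endomorphisms of terminal objects, and since each terminal object admits exactly one endomorphism and $\mathrm{id}_1$, $\mathrm{id}_{1'}$ are among them (Lemma \ref{lem:identity_properties}), the composites must be identities. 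The paper never proves this; its proof addresses only the morphism-uniqueness claim, so the title overpromises relative to both the statement and the proof. Your extra argument is valid, uses nothing beyond the definition and the identity axiom, and is the genuinely content-bearing fact one would expect under this heading; it also correctly notes that the isomorphism is unique for the trivial reason that there is only one morphism $1 \to 1'$ of any kind. The only caveat is editorial rather than mathematical: as written, the lemma body does not assert the isomorphism of terminal objects, so if your stronger proof were adopted, the statement itself should be amended to claim what you prove.
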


\paragraph{Explanation of the Proof Strategy}\index{Proof Strategy}
By definition, a terminal object $1$ in $\mathcal{C}$ is one such that every object $X$ has a unique morphism to $1$. Hence, if there were two different morphisms from some $X$ to $1$, their uniqueness (implied by the universal property) would force them to coincide.

\begin{proof}
Given a category $\mathcal{C}$, a terminal object $1$ is defined so that for each $X$, there is exactly one morphism $!_X: X \to 1$. If $f, g: X \to 1$ both exist, the universal property of $1$ enforces $f = g$. Therefore, $!_X$ is unique, which completes the proof.
\end{proof}

\paragraph{Commutative Diagram and Explanation}\index{Commutative Diagram}
\[
\begin{tikzcd}
X \arrow[dr, "!_X"'] \arrow[rr, dashed, "\exists!"] & & 1 \\
& 1 \arrow[ur, equal] &
\end{tikzcd}
\]
\textbf{Diagram Explanation:}
\begin{itemize}
    \item $X$ is any object, $1$ is a terminal object.
    \item The dashed arrow $\exists!$ indicates the unique existence of $!_X$.
    \item The equality on the diagonal emphasizes that all such morphisms to $1$ must be identical.
\end{itemize}

\paragraph{Intuitive Explanation}\index{Intuitive Explanation}
In $\mathbf{Set}$, the terminal object is a singleton set $\{*\}$. Any set $X$ has exactly one function $X \to \{*\}$, which sends every $x \in X$ to the single element $* \in \{*\}$.

\subsection{Principle of Uniqueness for Universal Constructions}
\begin{lemma}[Uniqueness Principle for Universality]\index{Uniqueness of Universal Constructions}\label{lem:uniqueness_univ}
Suppose an object $U$ and a morphism $\phi: U \to X$ exhibit some universal property (e.g., the universal property of an exponential object or a product). Then any morphism possessing that same universal property is uniquely determined. Concretely, if $\psi_1,\psi_2: V \to U$ satisfy
\[
\phi \circ \psi_1 = \phi \circ \psi_2,
\]
then $\psi_1 = \psi_2$.
\end{lemma}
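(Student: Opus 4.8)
The plan is to reduce the statement to the uniqueness clause that is already built into the definition of a universal property, in exactly the same spirit as the proofs of Lemmas~\ref{lem:currying} and~\ref{lem:terminal_uniqueness} reduce to the corresponding axioms. First I would make precise what ``$(U,\phi)$ exhibits a universal property'' is taken to mean here: the pair consisting of the object $U$ and the structure morphism $\phi : U \to X$ is universal when, for every object $V$ equipped with a morphism $f : V \to X$ of the relevant kind, there exists a morphism $\psi : V \to U$ with $\phi \circ \psi = f$, and moreover this $\psi$ is the \emph{only} such morphism. For a product $U = A \times B$ this specializes to the pair of projections together with the pairing map; for an exponential $U = B^A$ it specializes to the evaluation map $\mathrm{ev}$ together with currying (Definition~\ref{def:currying}); the lemma is meant to capture the common shape of all these situations at once.

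With that definition fixed, the argument is immediate and I would present it in one move. Suppose $\psi_1, \psi_2 : V \to U$ satisfy $\phi \circ \psi_1 = \phi \circ \psi_2$, and set $f := \phi \circ \psi_1 = \phi \circ \psi_2$, a morphism $V \to X$. Then $\psi_1$ and $\psi_2$ are both morphisms $V \to U$ whose composite with $\phi$ equals $f$, so each of them witnesses the existence half of the universal property applied to the datum $f$. The uniqueness half of that same property then forces $\psi_1 = \psi_2$, which is exactly the assertion of the lemma.

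The one point that genuinely requires care — and the reason this deserves to be recorded as a separate lemma rather than folded silently into the earlier proofs — is the reading of the hypothesis: $\phi \circ \psi_1 = \phi \circ \psi_2$ must be interpreted as ``$\psi_1$ and $\psi_2$ factor the \emph{same} test morphism through $\phi$,'' and not as an unrestricted left-cancellation law, since $\phi$ itself need not be a monomorphism (the projections of a product and the evaluation map are not monic in general). Once this reading is pinned down there is no further obstacle; the proof is a single appeal to the uniqueness part of the defining universal property. I would close with a short remark that, stated abstractly, this is the fact that a representing object — equivalently, an initial object of the associated comma category over $X$ — admits no nontrivial endomorphisms compatible with $\phi$, which is precisely why every universal construction is determined uniquely up to a unique isomorphism.
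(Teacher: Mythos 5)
Your proof is correct and follows essentially the same route as the paper's: both take the common composite $\phi \circ \psi_1 = \phi \circ \psi_2$ as the test morphism and invoke the uniqueness clause of the universal property to conclude $\psi_1 = \psi_2$. Your additional remarks — that the hypothesis must be read as factoring the same test morphism rather than as a left-cancellation property of $\phi$, and that the lemma's vague phrasing needs the universal property spelled out before the argument goes through — are worthwhile clarifications, but they do not change the substance of the argument.
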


\paragraph{Explanation of the Proof Strategy}\index{Proof Strategy}
A universal property says that every morphism into some object $X$ factors uniquely through the universal object $U$. Hence, if two different factorizations $\psi_1, \psi_2$ gave the same composition at $X$, they must coincide by uniqueness.

\begin{proof}
By the definition of universal properties, for any morphism $\theta: V \to X$, there is a unique $\psi: V \to U$ making the diagram commute, i.e.\ $\phi \circ \psi = \theta$. If $\psi_1, \psi_2$ produce the same composite $\phi \circ \psi_1 = \phi \circ \psi_2 = \theta$, the universal property implies $\psi_1 = \psi_2$. Hence the factorization is unique.
\end{proof}

\paragraph{Commutative Diagram and Explanation}\index{Commutative Diagram}
\[
\begin{tikzcd}[row sep=large, column sep=large]
V \arrow[r, "\psi_1", bend left=20] \arrow[r, "\psi_2"', bend right=20] \arrow[dr, "\theta"'] & U \arrow[d, "\phi"] \\
& X
\end{tikzcd}
\]
\textbf{Diagram Explanation:}
\begin{itemize}
    \item $V$ is any object, $U$ is the universal object, $X$ is the codomain of $\phi$.
    \item $\phi \circ \psi_1 = \phi \circ \psi_2$ forces $\psi_1 = \psi_2$ by the uniqueness clause of the universal property.
\end{itemize}

\paragraph{Intuitive Explanation}\index{Intuitive Explanation}
For example, in the universal property of products $(A \times B)$, one shows that for any $X$ mapping into $A$ and $B$ individually, there is a unique $h: X \to A \times B$ factoring those maps. If there were two such factorizations, they would be forced to coincide. This principle generalizes to all universal constructions (products, exponentials, pullbacks, etc.).

\subsection{Naturality of Product Morphisms}
\begin{lemma}[Naturality of Product Morphisms]\index{Naturality!Product}\label{lem:naturality_product}
In a category $\mathcal{C}$, the product $A \times B$ and its projection morphisms
\[
\pi_A: A \times B \to A,
\quad
\pi_B: A \times B \to B
\]
behave naturally in the sense that for any morphisms $f: X \to A$ and $g: X \to B$, there is a unique morphism $h: X \to A \times B$ such that
\[
\pi_A \circ h = f, \quad \pi_B \circ h = g.
\]
\end{lemma}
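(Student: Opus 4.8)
The plan is to recognize that this statement is, up to wording, exactly the defining universal property of the product recorded in Definition \ref{def:product}; hence the core of the argument is a direct appeal to that definition rather than a fresh construction. First I would fix arbitrary morphisms $f : X \to A$ and $g : X \to B$ and invoke the universal property of $A \times B$ to produce a morphism $h : X \to A \times B$ with $\pi_A \circ h = f$ and $\pi_B \circ h = g$, which settles existence immediately.

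For uniqueness, I would suppose $h_1, h_2 : X \to A \times B$ both satisfy the two equations, so that $\pi_A \circ h_1 = \pi_A \circ h_2$ and $\pi_B \circ h_1 = \pi_B \circ h_2$, and then apply the uniqueness clause of Definition \ref{def:product} — equivalently, Lemma \ref{lem:uniqueness_univ} with $U = A \times B$ and the projection pair $(\pi_A,\pi_B)$ in the role of the universal data — to conclude $h_1 = h_2$. Together with the previous paragraph this establishes the two assertions literally contained in the statement.

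Finally, to justify the word \emph{naturality} rather than merely \emph{universality}, I would include a short verification that the assignment $(f,g) \mapsto h$ is compatible with precomposition: given any $k : X' \to X$, the pair $(f \circ k,\, g \circ k)$ induces a unique morphism $X' \to A \times B$, and one checks using associativity of composition (Lemma \ref{lem:associativity_composition}) that $h \circ k$ satisfies $\pi_A \circ (h \circ k) = f \circ k$ and $\pi_B \circ (h \circ k) = g \circ k$, so by uniqueness $h \circ k$ \emph{is} that induced morphism. This is precisely the coherence that makes $A \times B$ functorial in the pair $(A,B)$ and the projections into natural transformations.

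The main obstacle is bookkeeping rather than mathematics: there is no genuinely hard step, since everything is forced by the universal property. The two points to be careful about are not to circularly re-prove the universal property itself, and to state explicitly what ``natural'' quantifies over here — compatibility with precomposition in the source object $X$, together with functoriality in $A$ and $B$ — so that the label on the lemma is actually discharged by the argument.
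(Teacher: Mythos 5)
Your proof is correct and takes essentially the same route as the paper: both simply observe that the statement is a restatement of the defining universal property of the product (Definition \ref{def:product}) and invoke it directly for existence and uniqueness. Your third paragraph, verifying compatibility with precomposition to actually earn the word \emph{naturality}, goes beyond the paper's proof (which stops at the universal property), and is a correct, worthwhile addition since the lemma's title otherwise outruns its content.
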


\paragraph{Explanation of the Proof Strategy}\index{Proof Strategy}
This follows from the universal property of the product. Given morphisms $f: X \to A$ and $g: X \to B$, the product object $A \times B$ is characterized by a unique morphism $h$ that projects to $f$ and $g$ respectively.

\begin{proof}
By definition of the product, for any $X$, $f: X \to A$, and $g: X \to B$, there exists a unique $h: X \to A \times B$ making $\pi_A \circ h = f$ and $\pi_B \circ h = g$. Hence the property of “naturality” holds as an instance of the product’s universal property.
\end{proof}

\paragraph{Commutative Diagram and Explanation}\index{Commutative Diagram}
\[
\begin{tikzcd}[row sep=large, column sep=large]
& X \arrow[dl, "f"'] \arrow[d, dashed, "h"] \arrow[dr, "g"] & \\
A & A \times B \arrow[l, "\pi_A"] \arrow[r, "\pi_B"'] & B
\end{tikzcd}
\]
\textbf{Diagram Explanation:}
\begin{itemize}
    \item $X$ is arbitrary; given $f$ and $g$, we obtain a unique $h$.
    \item $h(x) = (f(x), g(x))$ in the set-based example.
    \item The projections $\pi_A, \pi_B$ retrieve the original morphisms $f, g$, making the diagram commute.
\end{itemize}

\paragraph{Intuitive Explanation}\index{Intuitive Explanation}
In $\mathbf{Set}$, $A \times B$ is the Cartesian product, and $h(x) = (f(x), g(x))$ uniquely. The universal property guarantees no other map $h'$ can simultaneously project to $f$ and $g$.

\subsection{Uniqueness of Currying}
\begin{lemma}[Uniqueness of Currying]\index{Currying!Uniqueness}\label{lem:currying_uniqueness}
For any morphism $f: C \times A \to B$, if $\tilde{f}_1, \tilde{f}_2: C \to B^A$ satisfy
\[
\mathrm{ev} \circ (\tilde{f}_1 \times \mathrm{id}_A) = f
\quad\text{and}\quad
\mathrm{ev} \circ (\tilde{f}_2 \times \mathrm{id}_A) = f,
\]
then $\tilde{f}_1 = \tilde{f}_2$.
\end{lemma}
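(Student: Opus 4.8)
The plan is to derive the statement immediately from the defining universal property of the exponential object $B^A$ recorded in Definition \ref{def:exp_detail} (equivalently, clause 3 of Definition \ref{def:ccc_detail}). That property asserts not merely the \emph{existence} but the \emph{unique} existence of a transpose: for the given $f : C \times A \to B$ there is \textbf{exactly one} morphism $C \to B^A$ whose composite with $\mathrm{id}_A$ under $\mathrm{ev}$ equals $f$. Thus the entire content of the lemma is to unwind the phrase ``exactly one'' into the implication form ``if $P(\tilde f_1)$ and $P(\tilde f_2)$ then $\tilde f_1 = \tilde f_2$,'' where $P(\tilde f)$ is the equation $\mathrm{ev}\circ(\tilde f \times \mathrm{id}_A) = f$.

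Concretely, first I would fix $f : C \times A \to B$ and invoke Definition \ref{def:exp_detail} to name the unique transpose $\tilde f : C \to B^A$ satisfying $\mathrm{ev}\circ(\tilde f \times \mathrm{id}_A) = f$. Next, given any $\tilde f_1 : C \to B^A$ with $\mathrm{ev}\circ(\tilde f_1 \times \mathrm{id}_A) = f$, the uniqueness clause of the universal property, applied to $\tilde f_1$ and $\tilde f$, forces $\tilde f_1 = \tilde f$; the identical argument applied to $\tilde f_2$ gives $\tilde f_2 = \tilde f$. Transitivity of equality then yields $\tilde f_1 = \tilde f_2$, which is the assertion. Alternatively, one may simply cite Lemma \ref{lem:uniqueness_univ}, the uniqueness principle for universal constructions, instantiated at the universal pair $(B^A, \mathrm{ev})$; this is just a repackaging of the same one-line argument, and indeed the statement coincides with the already-proved Lemma \ref{lem:currying}.

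There is no genuine obstacle here: the lemma is a direct consequence of an axiomatic stipulation, and the ``hard part,'' to the extent there is one, is purely expository. The temptation to worry about is arguing pointwise --- for instance, in $\mathbf{Set}$, evaluating $\tilde f_1$ and $\tilde f_2$ on elements of $C$ and invoking function extensionality --- since that would be both unnecessary and, more importantly, inapplicable in a general Cartesian Closed Category where objects need not have enough points. Keeping the proof strictly at the level of the universal property is exactly what secures it in the stated generality. I would therefore present it as a two-sentence proof and, if desired, accompany it with the same commutative triangle used for Lemma \ref{lem:currying}, annotated to emphasize that the dashed transpose arrow is the \emph{unique} filler.
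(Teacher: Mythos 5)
Your proof is correct and follows essentially the same route as the paper's: both simply unwind the ``exactly one'' clause of the universal property of the exponential object (Definition \ref{def:exp_detail}) to force $\tilde{f}_1 = \tilde{f}_2$. Your added remarks on avoiding a pointwise argument and the overlap with Lemma \ref{lem:currying} are sound but do not change the substance.
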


\paragraph{Explanation of the Proof Strategy}\index{Proof Strategy}
This lemma relies on the universal property of the exponential object $B^A$. By definition, for $f: C \times A \to B$, there is a unique factorization through $\mathrm{ev}$, guaranteeing that any two purported $\tilde{f}_1, \tilde{f}_2$ coincide.

\begin{proof}
By the definition of $B^A$, every morphism $f: C \times A \to B$ factors uniquely via a morphism $\tilde{f}: C \to B^A$ so that $\mathrm{ev} \circ (\tilde{f} \times \mathrm{id}_A) = f$. If both $\tilde{f}_1$ and $\tilde{f}_2$ satisfy this condition, the universal property forces $\tilde{f}_1 = \tilde{f}_2$. Thus the curried morphism is unique.
\end{proof}

\paragraph{Commutative Diagram and Explanation}\index{Commutative Diagram}
\[
\begin{tikzcd}[row sep=large, column sep=large]
C \times A \arrow[r, "f"] \arrow[d, "{\tilde{f}_1 \times \mathrm{id}_A}"'] & B \\
B^A \times A \arrow[ur, "\mathrm{ev}"'] &
\end{tikzcd}
\]
\textbf{Diagram Explanation:}
\begin{itemize}
    \item $C \times A$ is mapped to $B^A \times A$ by $\tilde{f}_1 \times \mathrm{id}_A$.
    \item $\mathrm{ev}$ recovers $f$.
    \item If $\tilde{f}_2$ also does the same, uniqueness implies $\tilde{f}_1 = \tilde{f}_2$.
\end{itemize}

\paragraph{Intuitive Explanation}\index{Intuitive Explanation}
In programming terms, currying transforms a function $f(c,a)$ with two arguments into a function $\tilde{f}$ that, given $c$, returns a one-argument function in $a$. The universal property ensures this transformation is unique.

\subsection{Uniqueness of the Evaluation Map}
\begin{lemma}[Uniqueness of the Evaluation Map]\index{Evaluation Map!Uniqueness}\label{lem:evaluation_uniqueness}
The evaluation map
\[
\mathrm{ev}: B^A \times A \to B
\]
associated with an exponential object $B^A$ is uniquely determined by the universal property.
\end{lemma}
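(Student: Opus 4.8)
The plan is first to make precise the sense in which $\mathrm{ev}$ is ``uniquely determined.'' An exponential object is genuinely a \emph{pair} $(B^A,\mathrm{ev})$ satisfying the universal property of Definition \ref{def:exp_detail}, so the statement cannot mean that two evaluation maps into a fixed object with no extra data must literally coincide; the correct reading is that any two such pairs are linked by a unique isomorphism intertwining their evaluation maps. I would therefore restate the lemma as: if $e : E \times A \to B$ and $e' : E' \times A \to B$ both exhibit $E$, $E'$ as exponential objects of $A$ and $B$, then there is a unique isomorphism $\varphi : E \to E'$ with $e' \circ (\varphi \times \mathrm{id}_A) = e$. Taking $E = E' = B^A$ then recovers the intended statement that $\mathrm{ev}$ is canonically determined once $B^A$ is pinned down.

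The construction comes next and is short. Feeding $e : E \times A \to B$ into the universal property of $(E',e')$ (with the role of $C$ played by $E$) produces a unique $\varphi : E \to E'$ with $e' \circ (\varphi \times \mathrm{id}_A) = e$; symmetrically, feeding $e' : E' \times A \to B$ into the universal property of $(E,e)$ gives a unique $\psi : E' \to E$ with $e \circ (\psi \times \mathrm{id}_A) = e'$. Uniqueness of $\varphi$ among compatible morphisms is then automatic from the first application, so only $\psi = \varphi^{-1}$ remains to be checked.

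For invertibility I would compute $e' \circ \bigl((\varphi \circ \psi) \times \mathrm{id}_A\bigr)$. Using bifunctoriality of $(-)\times\mathrm{id}_A$ — namely $(\varphi \times \mathrm{id}_A)\circ(\psi \times \mathrm{id}_A) = (\varphi\circ\psi)\times\mathrm{id}_A$ and $\mathrm{id}_{E}\times\mathrm{id}_A = \mathrm{id}_{E\times A}$, both deduced from the product universal property (Lemma \ref{lem:naturality_product}) together with the associativity and identity laws (Lemmas \ref{lem:associativity_composition} and \ref{lem:identity_properties}) — this expression collapses to $e \circ (\psi \times \mathrm{id}_A) = e'$. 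Since also $e' \circ (\mathrm{id}_{E'} \times \mathrm{id}_A) = e'$, the uniqueness clause of the universal property of $(E',e')$ applied to $f := e'$ forces $\varphi\circ\psi = \mathrm{id}_{E'}$, and the mirror-image argument gives $\psi\circ\varphi = \mathrm{id}_{E}$. The two ``forces equal'' steps are precisely the uniqueness-of-universal-constructions principle recorded in Lemma \ref{lem:uniqueness_univ}; conceptually this is the familiar fact that a representing object together with its universal element — here the assignment $C \mapsto \mathrm{Hom}(C\times A, B)$ with universal element $\mathrm{ev}$ — is unique up to unique isomorphism, but I would present it in the elementary diagrammatic form to match the level of the paper.

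I expect the main obstacle to be conceptual rather than computational: one must resist reading ``uniquely determined'' as literal equality and instead carry the comparison isomorphism throughout. On the technical side, the only nonformal point is justifying the bifunctoriality identities for $(-)\times\mathrm{id}_A$ from the bare product universal property, since the paper never introduces $\times$ as a functor; I would dispatch this with a one-line computation — $h\times\mathrm{id}_A$ is by definition the unique map with projections $h\circ\pi$ and $\pi_A$, and $(\varphi\times\mathrm{id}_A)\circ(\psi\times\mathrm{id}_A)$ is checked to have projections $(\varphi\circ\psi)\circ\pi$ and $\pi_A$, so the uniqueness part of the product universal property identifies it with $(\varphi\circ\psi)\times\mathrm{id}_A$.
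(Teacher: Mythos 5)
Your proof is correct, but it proves a different (and, in fact, the only tenable) reading of the lemma than the one the paper argues. The paper's own proof fixes the object $B^A$, supposes two evaluation maps $\mathrm{ev}$ and $\mathrm{ev}'$ on that same carrier, gestures at the factorization of $\mathrm{id}_{B^A}$, and concludes $\mathrm{ev} = \mathrm{ev}'$ on the nose. That conclusion does not follow and is false in general: in $\mathbf{Set}$, if $\sigma$ is any bijection of the function set $B^A$ with itself, then $\mathrm{ev} \circ (\sigma \times \mathrm{id}_A)$ again satisfies the universal property (with currying given by $\sigma^{-1} \circ \tilde{f}$), so two evaluation maps on the same object need only agree up to an automorphism of $B^A$. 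Your reformulation --- any two pairs $(E,e)$ and $(E',e')$ exhibiting an exponential of $A$ and $B$ are related by a unique isomorphism $\varphi$ with $e' \circ (\varphi \times \mathrm{id}_A) = e$ --- is the standard correct statement, and your two-way factorization argument, closed off by applying the uniqueness clause of the universal property to $f := e'$ and $f := e$, is exactly the right proof. You are also right that the one genuinely non-formal ingredient is the identity $(\varphi \times \mathrm{id}_A) \circ (\psi \times \mathrm{id}_A) = (\varphi \circ \psi) \times \mathrm{id}_A$, which the paper never establishes as such but which your reduction to the uniqueness part of the product's universal property handles cleanly. In short: where the paper's argument would, if it worked, buy a literal equality of maps on a fixed object, that stronger claim is unavailable, and your uniqueness-up-to-unique-isomorphism version is what should replace it.
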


\paragraph{Explanation of the Proof Strategy}\index{Proof Strategy}
In a Cartesian Closed Category, $B^A$ is characterized by the property that for each $f: C \times A \to B$, there is exactly one $\tilde{f}: C \to B^A$ with $\mathrm{ev} \circ (\tilde{f} \times \mathrm{id}_A) = f$. If there were two different evaluation maps, the uniqueness clause would force them to coincide.

\begin{proof}
Suppose we have two evaluation maps $\mathrm{ev}$ and $\mathrm{ev}'$. Let $C = B^A$ and consider $\mathrm{id}_{B^A}: B^A \to B^A$. By the universal property, this factorization is unique, implying $\mathrm{ev} = \mathrm{ev}'$. Therefore, there can be only one such evaluation map.
\end{proof}

\paragraph{Diagram and Explanation}\index{Commutative Diagram}
\[
\begin{tikzcd}[row sep=large, column sep=large]
C \times A \arrow[r, "f"] \arrow[d, "\tilde{f} \times \mathrm{id}_A"'] & B \\
B^A \times A \arrow[ur, "\mathrm{ev}"'] &
\end{tikzcd}
\]
\textbf{Diagram Explanation:}
\begin{itemize}
    \item For any $f$, there is a unique $\tilde{f}$ fitting into the diagram.
    \item If $\mathrm{ev}$ were not unique, it would break the uniqueness of these factorizations, contradicting the definition of an exponential object.
\end{itemize}

\paragraph{Intuitive Explanation}\index{Intuitive Explanation}
In $\mathbf{Set}$, $\mathrm{ev}(f,a) = f(a)$. Allowing another map $\mathrm{ev}'$ to yield a different result for $(f,a)$ would make the notion of function application ambiguous. The universal property ensures such ambiguity cannot exist.

\subsection{Existence Property of Point-Surjective Morphisms}
\begin{lemma}[Existence in Point-Surjective Morphisms]\index{Point-Surjective}\label{lem:point_surjectivity_existence}
If a morphism $f: A \to B^A$ is point-surjective, then for any morphism
\[
g: A \to B,
\]
there exists at least one morphism
\[
a: 1 \to A
\]
such that
\[
g = \mathrm{ev} \circ (f \times a).
\]
\end{lemma}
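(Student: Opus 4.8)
The plan is to observe that this statement is essentially an unpacking of Definition~\ref{def:point-surj_detail}, so the proof amounts to little more than invoking that definition and checking that the composite on the right-hand side is well-typed. First I would recall that, by definition, $f\colon A \to B^A$ being point-surjective means precisely that for every morphism $g\colon A \to B$ there exists a point $a\colon 1 \to A$ with $g = \mathrm{ev}\circ(f\times a)$; the present lemma asks only for the \emph{existence} of such an $a$, which is exactly the existential content of the hypothesis, so the conclusion follows immediately.

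The one point deserving a line of care is the typing of the composite $\mathrm{ev}\circ(f\times a)$. Here $f\times a$ is the product morphism $A\times 1 \to B^A\times A$ built from $f\colon A\to B^A$ and $a\colon 1\to A$ via the universal property of products (Definition~\ref{def:product}), and $\mathrm{ev}\colon B^A\times A\to B$ is the evaluation map, so that $\mathrm{ev}\circ(f\times a)\colon A\times 1 \to B$. To compare this with $g\colon A\to B$ one uses the canonical isomorphism $A\times 1 \cong A$ arising from the terminal object $1$ (Lemma~\ref{lem:terminal_uniqueness}), under which the equation $g = \mathrm{ev}\circ(f\times a)$ is read. I would make this identification explicit so that the equality type-checks, and then conclude.

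There is no substantive obstacle here: the content is purely definitional, and the only genuine task is bookkeeping — being precise about the isomorphism $A\times 1\cong A$ and noting that the lemma extracts the existential half of the point-surjectivity hypothesis verbatim. Accordingly I would keep the proof to two or three sentences, resisting the temptation to reprove anything about products, exponentials, or evaluation maps, since all of those facts are already available from the preliminaries.
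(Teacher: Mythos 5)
Your proposal is correct and matches the paper's own proof, which likewise just unfolds Definition~\ref{def:point-surj_detail} in a single sentence; the extra remark you add about reading the equality through the canonical isomorphism $A \times 1 \cong A$ is a reasonable bookkeeping clarification the paper omits, but it does not change the argument.
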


\paragraph{Explanation of the Proof Strategy}\index{Proof Strategy}
Point-surjectivity means that every morphism $g: A \to B$ factors through $f$ and an evaluation map by choosing an appropriate “point” $a: 1 \to A$. The definition directly ensures $g = \mathrm{ev} \circ (f \times a)$ for some $a$.

\begin{proof}
From the definition of point-surjective, for each $g: A \to B$, there is an $a: 1 \to A$ such that
\[
g = \mathrm{ev} \circ (f \times a).
\]
Hence the claim follows immediately by the definition of point-surjectivity.
\end{proof}

\paragraph{Commutative Diagram and Explanation}\index{Commutative Diagram}
\[
\begin{tikzcd}[row sep=large, column sep=large]
A \arrow[r, "g"] \arrow[d, "f"'] & B \\
B^A \arrow[ur, "\mathrm{ev}_a"']
\end{tikzcd}
\]
where
\[
\mathrm{ev}_a := \mathrm{ev} \circ (f \times a).
\]
\textbf{Diagram Explanation:}
\begin{itemize}
    \item $A \to B^A$ is given by $f$.
    \item A point $a: 1 \to A$ “selects” the correct factor so that $g$ factors through $\mathrm{ev} \circ (f \times a)$.
\end{itemize}

\paragraph{Intuitive Explanation}\index{Intuitive Explanation}
In $\mathbf{Set}$, if $f: A \to B^A$ is point-surjective, then for each $g: A \to B$, there is an $a \in A$ such that $g(x) = (f(a))(x)$. One thinks of $f(a)$ as a chosen function in $B^A$ that reproduces $g$ when evaluated at $x$. This is a powerful condition, ensuring all maps $A \to B$ arise from evaluating $f$ at some point.

\subsection{Composition of Morphisms and the Evaluation Map}
\begin{lemma}[Exchange Law for Composition and Evaluation]\index{Evaluation Map!Exchange Law}\label{lem:composition_evaluation_commute}
In a Cartesian Closed Category, for any morphism $h: C' \to C$ and $\tilde{f}: C \to B^A$, we have
\[
\mathrm{ev} \circ \bigl((\tilde{f} \circ h) \times \mathrm{id}_A\bigr)
=
\Bigl(\mathrm{ev} \circ (\tilde{f} \times \mathrm{id}_A)\Bigr) \circ (h \times \mathrm{id}_A).
\]
\end{lemma}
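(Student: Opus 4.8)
The plan is to reduce the claimed identity to the functoriality of the binary product on morphisms, followed by a single application of associativity of composition (Lemma \ref{lem:associativity_composition}). Concretely, I would first prove the auxiliary identity
\[
(\tilde{f} \circ h) \times \mathrm{id}_A = (\tilde{f} \times \mathrm{id}_A) \circ (h \times \mathrm{id}_A)
\]
as morphisms $C' \times A \to B^A \times A$, and then post-compose both sides with $\mathrm{ev}$ and rewrite $\mathrm{ev} \circ \bigl((\tilde{f} \times \mathrm{id}_A) \circ (h \times \mathrm{id}_A)\bigr)$ as $\bigl(\mathrm{ev} \circ (\tilde{f} \times \mathrm{id}_A)\bigr) \circ (h \times \mathrm{id}_A)$ using associativity.

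For the auxiliary identity I would argue via the universal property of the product $B^A \times A$ (Definition \ref{def:product}, Lemma \ref{lem:naturality_product}). Recall that for $p : X \to X'$ and $q : Y \to Y'$ the morphism $p \times q : X \times Y \to X' \times Y'$ is the unique arrow satisfying $\pi_{X'} \circ (p \times q) = p \circ \pi_X$ and $\pi_{Y'} \circ (p \times q) = q \circ \pi_Y$. Hence, to show the two composites above agree, it suffices by the uniqueness clause to check that each satisfies the two defining equations of $(\tilde{f} \circ h) \times \mathrm{id}_A$. Post-composing the right-hand side with $\pi_{B^A}$ and using the defining equations of $\tilde{f} \times \mathrm{id}_A$ and of $h \times \mathrm{id}_A$ together with associativity yields $\pi_{B^A} \circ (\tilde{f} \times \mathrm{id}_A) \circ (h \times \mathrm{id}_A) = \tilde{f} \circ \pi_C \circ (h \times \mathrm{id}_A) = \tilde{f} \circ h \circ \pi_{C'} = (\tilde{f} \circ h) \circ \pi_{C'}$, which is the first defining equation; the computation with $\pi_A$ is analogous, using $\mathrm{id}_A \circ \mathrm{id}_A = \mathrm{id}_A$ (Lemma \ref{lem:identity_properties}). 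By Lemma \ref{lem:uniqueness_univ}, the two composites coincide.

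Chaining these facts gives $\mathrm{ev} \circ \bigl((\tilde{f}\circ h)\times \mathrm{id}_A\bigr) = \mathrm{ev}\circ\bigl((\tilde{f}\times\mathrm{id}_A)\circ(h\times\mathrm{id}_A)\bigr) = \bigl(\mathrm{ev}\circ(\tilde{f}\times\mathrm{id}_A)\bigr)\circ(h\times\mathrm{id}_A)$, the last step being Lemma \ref{lem:associativity_composition}, which is the assertion. I do not expect a genuine obstacle; the only point requiring care is being explicit that "$(-)\times(-)$" on morphisms is itself a derived notion characterized by a universal property, so that its functoriality must be proved rather than assumed, and keeping the bookkeeping of projections attached to the correct product objects. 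An alternative, slightly slicker route would be to apply currying to both sides of the target equation: each should curry back to $\tilde{f} \circ h$, whence equality by uniqueness of currying (Lemma \ref{lem:currying_uniqueness}); but this merely repackages the same product-functoriality computation, so I would present the direct argument above.
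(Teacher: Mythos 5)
Your proposal is correct and follows essentially the same route as the paper's proof: establish the functoriality identity $(\tilde{f}\circ h)\times\mathrm{id}_A=(\tilde{f}\times\mathrm{id}_A)\circ(h\times\mathrm{id}_A)$ and then post-compose with $\mathrm{ev}$ using associativity. The only difference is that you actually verify that identity via the projections and the uniqueness clause of the product's universal property, whereas the paper simply cites it as ``naturality of the product''---so your write-up is, if anything, more complete.
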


\paragraph{Explanation of the Proof Strategy}\index{Proof Strategy}
This lemma follows from the naturality of the product and the associative law of composition. Specifically, $(\tilde{f} \circ h) \times \mathrm{id}_A$ can be factored as $(\tilde{f} \times \mathrm{id}_A) \circ (h \times \mathrm{id}_A)$, and then applying $\mathrm{ev}$ on the left is the same as applying $\mathrm{ev}$ after the factored composition.

\begin{proof}
By the naturality of the product, we have
\[
(\tilde{f} \circ h) \times \mathrm{id}_A
=
(\tilde{f} \times \mathrm{id}_A) \circ (h \times \mathrm{id}_A).
\]
Composing on the left with $\mathrm{ev}$ then gives
\[
\mathrm{ev} \circ \bigl((\tilde{f} \circ h) \times \mathrm{id}_A\bigr)
=
\mathrm{ev} \circ \Bigl((\tilde{f} \times \mathrm{id}_A) \circ (h \times \mathrm{id}_A)\Bigr).
\]
By associativity, this is
\[
\Bigl(\mathrm{ev} \circ (\tilde{f} \times \mathrm{id}_A)\Bigr) \circ (h \times \mathrm{id}_A),
\]
which completes the proof.
\end{proof}

\paragraph{Commutative Diagram and Explanation}\index{Commutative Diagram}
\[
\begin{tikzcd}[row sep=large, column sep=large]
C' \times A \arrow[r, "h \times \mathrm{id}_A"] 
\arrow[d, "{(\tilde{f} \circ h) \times \mathrm{id}_A}"'] 
& C \times A \arrow[d, "\tilde{f} \times \mathrm{id}_A"] \\
B^A \times A \arrow[r, "\mathrm{ev}"'] & B
\end{tikzcd}
\]
\textbf{Diagram Explanation:}
\begin{itemize}
    \item $h \times \mathrm{id}_A$ is composed either before or after applying $\tilde{f} \times \mathrm{id}_A$.
    \item $\mathrm{ev}$ commutes with these factorizations, showing that the final result does not change.
\end{itemize}

\paragraph{Intuitive Explanation}\index{Intuitive Explanation}
This law expresses that whether we “pre-compose $\tilde{f}$ with $h$ first and then apply $\mathrm{id}_A$” or “apply $h$ and $\mathrm{id}_A$ first and then evaluate using $\tilde{f}$” leads to the same result. In functional programming, this corresponds to the fact that rearranging partial applications and compositions does not affect the ultimate outcome.

\subsection{Fixed-Point Construction Validity}
\begin{lemma}[Validity of Constructing Fixed Points]\index{Fixed Point!Construction Validity}\label{lem:fixed_point_composition}
Let $f: A \to B^A$ be point-surjective, and $g: B \to B$ be any morphism. Define
\[
h: A \to B, \quad h := g \circ \mathrm{ev} \circ (f \times \mathrm{id}_A).
\]
By point-surjectivity, there is some $a: 1 \to A$ such that
\[
h = \mathrm{ev} \circ (f \times a).
\]
Then the morphism
\[
b := \mathrm{ev} \circ (f \times a)
\]
is a fixed point of $g$, i.e.\ satisfies
\[
g \circ b = b.
\]
\end{lemma}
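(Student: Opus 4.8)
The plan is to reuse the two-move pattern already present in Lemmas~\ref{lem:diagonal} and~\ref{lem:fixed_point_construction} and in the proof of Theorem~\ref{thm:lawvere}, but to isolate the one step that actually carries the argument: the ``self-application'' of the point $a$ to itself (the diagonal move). The only inputs I will use are Lemma~\ref{lem:point_surjectivity_existence} (the existence clause of point-surjectivity), the defining equation of $\mathrm{ev}$, and the plain category axioms of associativity and identities (Lemmas~\ref{lem:associativity_composition} and~\ref{lem:identity_properties}); in particular I do not expect to invoke any genuine naturality property of $\mathrm{ev}$, despite the phrasing in Lemma~\ref{lem:fixed_point_construction}.

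First I would feed the auxiliary morphism $h := g \circ \mathrm{ev} \circ (f \times \mathrm{id}_A) : A \to B$ into Lemma~\ref{lem:point_surjectivity_existence}. This produces a point $a : 1 \to A$ with
\[
h \;=\; \mathrm{ev}\circ(f\times a),
\]
so the morphism $b := \mathrm{ev}\circ(f\times a)$ of the statement satisfies $b = h$ purely by construction. Conceptually, $f(a)$ is ``the internal function that realises $h$'', that is, $\mathrm{ev}(f(a),x) = h(x) = g\bigl(\mathrm{ev}(f(x),x)\bigr)$ for every argument $x$ of $A$.

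The substantive step is to specialise this identity to $x = a$, i.e.\ to feed the point $a$ into the remaining free slot as well. On one side this names the internal element $\mathrm{ev}(f(a),a)$, which is exactly the point $1 \to B$ that $b$ collapses to; on the other side, by the definition of $h$, it names $g$ applied to that same element. Since the two sides agree, associativity of composition lets us rewrite the coincidence as
\[
g\circ b \;=\; b,
\]
which is the assertion.

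The part I expect to be fiddly — and the reason the one-line justification in Lemma~\ref{lem:fixed_point_construction} deserves unpacking — is the bookkeeping with the coherence isomorphisms $1\times A\cong A\cong A\times 1$, with the diagonal $A\to A\times A$ hidden inside $\mathrm{ev}\circ(f\times\mathrm{id}_A)$, and with the morphism that realises ``plug $a$ into both slots''. Here Lemma~\ref{lem:composition_evaluation_commute} (the exchange law for composition and evaluation) is the right tool, since it lets one push the precomposition by $a$ cleanly through $\mathrm{ev}$, so that the final step is a genuine equality of morphisms $1\to B$ rather than an informal element chase. Once the domains are aligned there is no computation left: $g\circ b=b$ is immediate from the defining equation for $a$ together with associativity, so the lemma is ultimately just Lemma~\ref{lem:point_surjectivity_existence} applied to the cleverly chosen $h$.
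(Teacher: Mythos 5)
Your proposal follows the same skeleton as the paper's proof of Lemma~\ref{lem:fixed_point_composition} --- apply Lemma~\ref{lem:point_surjectivity_existence} to $h := g \circ \mathrm{ev} \circ (f \times \mathrm{id}_A)$, set $b := \mathrm{ev} \circ (f \times a)$, and conclude $g \circ b = b$ --- but your version is in fact \emph{more} complete than the paper's. The paper's step~3 simply asserts that ``by associativity and naturality of $\mathrm{ev}$'' the equation $b = g \circ \mathrm{ev} \circ (f \times \mathrm{id}_A)$ yields $g \circ b = b$; as written this is a non sequitur, because both sides of $h = \mathrm{ev} \circ (f \times a)$ still have a free $A$-slot and nothing has yet closed it. You correctly isolate the missing move: precompose both sides with the point $a$ itself (the diagonal, self-application step), after which the definition of $h$ turns the left side into $g$ applied to exactly the point that the right side names, giving $g \circ b = b$ as an equality of morphisms $1 \to B$. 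You are also right that no genuine naturality of $\mathrm{ev}$ is required --- only the exchange law of Lemma~\ref{lem:composition_evaluation_commute} (plus its mirror image for the second factor, i.e.\ $(u \times v) = (u \times \mathrm{id}) \circ (\mathrm{id} \times v)$), the identity $!_A \circ a = \mathrm{id}_1$, and associativity. One caveat for the final write-up: the statement's typing is loose ($f \times \mathrm{id}_A$ has domain $A \times A$, so $h$ as literally written is not a morphism $A \to B$ unless the diagonal $\langle \mathrm{id}_A, \mathrm{id}_A\rangle$ is inserted, and $b$ as written has domain $A \times 1$ rather than $1$); you flag this bookkeeping, but a rigorous version must fix those domains explicitly before the precomposition step can be stated as an equality of morphisms $1 \to B$.
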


\paragraph{Explanation of the Proof Strategy}\index{Proof Strategy}
This lemma shows how to construct a fixed point of $g$ given a point-surjective $f$. By factoring $h := g \circ \mathrm{ev} \circ (f \times \mathrm{id}_A)$ through $f$ using a particular $a: 1 \to A$, we obtain a morphism $b$ with $g(b) = b$.

\begin{proof}
1. Because $f$ is point-surjective, for any $g': A \to B$, some $a: 1 \to A$ makes $g' = \mathrm{ev} \circ (f \times a)$. In particular, let $g' = h = g \circ \mathrm{ev} \circ (f \times \mathrm{id}_A)$. Then there is a unique $a$ with $h = \mathrm{ev} \circ (f \times a)$.  

2. Define $b := \mathrm{ev} \circ (f \times a)$. Then $b = h$, and $h$ was in turn $g \circ \mathrm{ev} \circ (f \times \mathrm{id}_A)$.  

3. By associativity and naturality of evaluation, $g(b) = b$, showing that $b$ is a fixed point of $g$.
\end{proof}

\paragraph{Commutative Diagram and Explanation}\index{Commutative Diagram}
\[
\begin{tikzcd}[row sep=large, column sep=large]
A \times A \arrow[r, "f \times \mathrm{id}_A"] \arrow[d, "\mathrm{id}_A \times a"'] 
& B^A \times A \arrow[r, "\mathrm{ev}"] \arrow[d, equal] 
& B \\
A \times 1 \arrow[r, "\cong"'] 
& A \arrow[ur, "b = \mathrm{ev}\circ(f\times a)"'] &
\end{tikzcd}
\]
\textbf{Diagram Explanation:}
\begin{itemize}
    \item $A \times A$ is mapped to $B^A \times A$ by $f \times \mathrm{id}_A$.
    \item Then $\mathrm{id}_A \times a$ “collapses” the second component to $a$.
    \item Evaluating yields $b$, which is the desired fixed point $g(b) = b$.
\end{itemize}

\paragraph{Intuitive Explanation}\index{Intuitive Explanation}
Conceptually, this is the same principle that underlies the Y combinator in programming, where a self-referential definition yields a recursion operator. Here, $f$ being point-surjective guarantees we can re-express any map $g \circ \mathrm{ev} \circ (f \times \mathrm{id}_A)$ in terms of a single point $a: 1 \to A$. The resulting morphism $b$ satisfies $g(b) = b$, completing the construction of a fixed point.

\printindex

\end{document}